\documentclass[a4paper]{amsart}

\usepackage{amsmath,amssymb}

\allowdisplaybreaks
\newtheorem{theorem}{Theorem}[section]
\newtheorem{proposition}[theorem]{Proposition}
\newtheorem{corollary}[theorem]{Corollary}
\newtheorem{lemma}[theorem]{Lemma}

\newtheorem{preremark}[theorem]{Remark}
\newtheorem{predefinition}[theorem]{Definition}
\newtheorem{preexample}[theorem]{Example}
\newtheorem{prenotation}[theorem]{Notation}
\newtheorem{preconjecture}[theorem]{Conjecture}

\newenvironment{remark}{\begin{preremark}\rm}{\end{preremark}}
\newenvironment{definition}{\begin{predefinition}\rm}
	{\end{predefinition}}
\newenvironment{example}{\begin{preexample}\rm}{\end{preexample}}

\newcommand{\m}{{\mathfrak{m}}}

\newcommand{\M}{{\mathfrak{M}}}
\newcommand{\N}{{\mathfrak{N}}}
\newcommand{\q}{{\mathfrak{q}}}
\newcommand{\Q}{{\mathfrak{Q}}}

\newcommand{\X}{{\mathbb{X}}}

\newcommand{\QQ}{{\mathbb{Q}}}
\newcommand{\ZZ}{{\mathbb{Z}}}
\newcommand{\NN}{{\mathbb{N}}}
\renewcommand{\AA}{{\mathbb{A}}}

\newcommand{\cF}{{\mathcal{F}}}
\newcommand{\cG}{{\mathcal{G}}}
\newcommand{\cM}{{\mathcal{M}}}
\newcommand{\cN}{{\mathcal{N}}}

\newcommand{\cK}{{\mathcal{K}}}
\newcommand{\cD}{{\mathcal{D}}}
\newcommand{\cE}{{\mathcal{E}}}

\let\epsilon=\varepsilon

\def\phi{{\varphi}}
\let\Psi=\varPsi
\let\Phi=\varPhi
\let\theta=\vartheta
\let\rho=\varrho

\def\LT{\mathop{\rm LT}\nolimits}

\newcommand{\HdR}{H_{\rm dR}}
\newcommand{\charac}{\mathop{\rm char}\limits}
\newcommand{\id}{\mathop{\rm id}\nolimits}
\newcommand{\Ker}{\mathop{\rm Ker}\nolimits}
\renewcommand{\Im}{\mathop{\rm Im}\nolimits}
\newcommand{\Coker}{\mathop{\rm Coker}\nolimits}
\newcommand{\Spec}{\mathop{\rm Spec}\nolimits}
\newcommand{\Rad}{\mathop{\rm Rad}\nolimits}
\newcommand{\DF}{\mathop{\rm DF}\nolimits}

\newcommand{\gr}{{\mathop{\rm gr}}}
\newcommand{\ini}{{\mathop{\rm in}}}
\newcommand{\ord}{{\mathop{\rm ord}}}

\newcommand{\Supp}{{\mathop{\rm Supp}\nolimits}}

\newcommand{\soc}{{\mathop{\rm Soc}}}

\let\To=\longrightarrow
\def\TTo#1{\mathop{\longrightarrow}\limits ^{#1}}

\def\tfrac #1#2{{\textstyle\frac{#1}{#2}}}
\def\tsum_#1^#2{{\textstyle\sum\limits_{#1}^{#2}}}

\def\tprod_#1^#2{{\textstyle\prod\limits_{#1}^{#2}}}

\renewcommand{\dot}{{\scriptscriptstyle\bullet}}

\begin{document}
\title{On the De Rham Cohomology of Zero-Dimensional Schemes}

\author{Martin Kreuzer}
\address{Fakult\"at f\"ur Informatik und Mathematik, Universit\"at
	Passau, D-94030 Passau, Germany}
\email{Martin.Kreuzer@uni-passau.de}

\author{Le Ngoc Long}
\address{Department of Mathematics,
	University of Education - Hue University, 34 Le Loi Street, Hue City, Vietnam}
\email{lelong@hueuni.edu.vn}

\date{\today}

\begin{abstract}
The (naive) de Rham cohomology of a zero-dimensional scheme is the homology
of the K\"ahler differential algebra of its coordinate ring, viewed as a
complex. Since it is well-known to vanish in higher degrees if the ring is quasi-homogeneous,
we concentrate on the affine case. For a general affine $K$-algebra $R=P/I$,
where $\charac(K)=0$ and $P=K[x_1,\dots,x_n]$, we prove a vanishing theorem for $H_{\rm dR}^m(R)$ based
on the shape of a Macaulay basis of $dI\wedge \Omega^m_{P/K}$.

For an Artinian local algebra $A=P/\langle f_1,\dots,f_n\rangle$, where $\{f_1,\dots,f_n\}$ is a
super regular sequence, we show that $H_{\rm dR}^\dot(A)$ is non-trivial in general, but
trivial when the natural system of generators of the relation module of $\Omega^m_{A/K}$
is a standard basis. Moreover, we provide a detailed study of the dimension of $H_{\rm dR}^0(A)=\Ker(d_A)$
for Artinian local rings $A=P/I$.

The case of arbitrary affine zero-dimensional schemes is reduced to this case using a Galois splitting,
and as a result we obtain the de Rham cohomology of a fat point scheme. Many explicitly computed
examples and counterexamples support the results and indicate how subtle the de Rham cohomology
of a zero-dimensional scheme is in general.
\end{abstract}

\keywords{De Rham cohomology, zero-dimensional scheme, differential module, 
differential algebra, filtration}

\subjclass[2010]{Primary 13N05; Secondary  13D02, 13E10, 14F40}

\maketitle

%
%
\section{Introduction}

The use of differential algebra techniques in the study of 0-dimensional
subschemes of affine and projective spaces was introduced by G.\ de Dominicis and the first author
in~\cite{DK1999}, chiefly by looking at the K\"ahler differential module of a reduced 
0-dimensional scheme, i.e., a finite set of points, in a projective space. 
Later the current authors, together with T.N.K.\ Linh,
extended these methods to the K\"ahler differential algebras of arbitrary 0-dimensional schemes
(cf.~\cite{KLL2019,KLL2021,KLL2025}). Here we continue and extend our investigations
by considering the de Rham cohomology of 0-dimensional schemes. 

To make sense of this idea, we need to clarify it. First of all, we are not looking at the
\textit{algebraic de Rham cohomology} in the sense of Hartshorne (cf.~\cite{Har1975}) 
which is tailored to capture topological aspects of algebraic varieties and is 
thus not interesting for 0-dimensional schemes. Rather, we are looking
at the \textit{naive de Rham cohomology} which is defined as the homology of the complex
given by the K\"ahler differential algebra.
Secondly, it is known that the naive de Rham cohomology vanishes for standard graded $K$-algebras
(and, more generally, for quasi-homogeneous rings) in higher degrees and is equal to the base 
field in degree zero. Since the homogeneous coordinate ring of a 0-dimensional subscheme of a projective space
is standard graded, it becomes clear that we have to embed the 0-dimensional scheme into an affine space
and study the naive de Rham homology (which we simply called the de Rham cohomology from now on)
of the affine coordinate ring. Moreover, as the affine coordinate ring of a 0-dimensional scheme
is the direct product of its local rings, we are led to studying the de Rham cohomology of 
local Artinian algebras. At this point the investigation becomes truly subtle and intriguing.

Let us describe our findings by going through them section by section. In Section~\ref{sec2}
we introduce the setting and recall the basic definitions and results. We work over a perfect
field~$K$ which is frequently assumed to have characteristic zero. Given an affine $K$-algebra
$R=K[x_1,\dots,x_n]/I$, the exterior algebra $\Omega^\dot_{R/K} = \Lambda^\dot\, \Omega^1_{R/K}$
over its K\"ahler differential module $\Omega^1_{R/K}$ is called its K\"ahler differential algebra.
Using exterior differentiation, it can be viewed as a complex, and the (naive) de Rham
cohomology $\HdR^\dot(R/K)$ of $R/K$ is the homology of this complex. To simplify the notation, let us drop
$/K$ when the base field is clear.

A first consequence of Poincar\'e's Lemma, which says that $\HdR^\dot(K[x_1,\dots,x_n])$ is trivial
in characteristic zero, is that the highest de Rham cohomology satisfies $\HdR^n(R)=0$.
After recalling some functorial properties of $\HdR^\dot(R)$, we introduce the Euler-Koszul complex
of~$R$ (cf.~Def.~\ref{def:Euler}) and explain how it can be used to prove that the de Rham cohomology
of positively graded rings is trivial (see Prop.~\ref{prop:dRgraded}). In the final part of this section
we present a new criterion which can be used to show that certain de Rham cohomology groups vanish.
Namely, let $\charac(K)=0$, let $P=K[x_1,\dots,x_n]$, and assume that the submodule
$dI\wedge \Omega^m_P$ of~$\Omega^{m+1}_P$ has a Macaulay basis of the form $\{df_1\wedge w_1,\dots,
df_r\wedge w_r\}$ with $f_i\in I$ and $w_i \in \Omega^m_P$. (A Macaulay basis is a system of generators
whose degree forms generate the degree form module, see~\cite{KR2005}, Sect.~4.2.B.) Then $\HdR^m(R)=0$.

In view of the introductory discussion above, it is apparent that Section~\ref{sec3},
in which we discuss the de Rham cohomology of Artinian local $K$-algebras $A=K[x_1,\dots,x_n]/I$,
is central to this paper. If $I=\M^\nu$ is a power of the maximal ideal $\M = \langle x_1,\dots,x_n\rangle$,
the scheme $\Spec(A)$ is commonly called a \textit{fat point}. Since the ideal $\M^\nu$ is homogeneous,
the de Rham cohomology of a fat point, and in particular of a reduced point, 
is trivial (see Prop.~\ref{prop:dRFatPoint}).

As soon as we move even slightly away from this case, the situation becomes extraordinarily subtle
and tricky. In Subsection~3.2 we assume $\charac(K)=0$ and look at Artinian local complete intersections
$A=K[x_1,\dots,x_n]/\langle f_1,\dots,f_n\rangle$. Yet, for such nice rings, the kernel of the universal
derivation $d_A:\ A  \longrightarrow \Omega^1_A$ can be larger than~$K$, i.e., we can have
$\dim_K(\HdR^0(A))>1$, and the Euler characteristic of~$\Omega^\dot_A$ can be larger than~1.
Even if we assume that $\{f_1,\dots,f_n\}$ is a super-regular sequence, i.e., that the
initial forms $\{\ini(f_1),\dots,\ini(f_n)\}$ are a homogeneous regular sequence, the problems persist
(see Examples~\ref{ex:KerD3dim},~\ref{ex:EulerChar2}, and~\ref{ex:badSSCI}).

The deeper reason behind the non-vanishing of the de Rham cohomology in these cases is
the following phenomenon. Let us equip~$A$ and~$\Omega^\dot_A$ with the $\m$-adic filtrations,
where $\m=\langle \bar{x}_1,\dots,\bar{x}_n\rangle$ and the induced filtration on~$\Omega^\dot_A$
is denoted by~$\cG$. Then there is a canonical surjective $\gr_\m(A)$-DG-algebra homomorphism
$$
\Phi:\; \Omega^\dot_{\gr_\m(A)} \;\longrightarrow\; \gr_\cG (\Omega^\dot_A)
$$
(see Lemma~\ref{lem:Phi}). The failure of this map to be bijective introduces non-trivial differential 
constants, i.e., non-constant elements in $\Ker(d_A)$, or non-vanishing higher de Rham cohomology modules.
One condition which forces the map~$\Phi$ to be an isomorphism for a 0-dimensional super-strict
complete intersection $A=K[x_1,\dots,x_n]/\langle f_1,\dots,f_n\rangle$ is to require that
the basis elements of the relation module $\sum_{i=1}^n f_i\, \Omega^m_P + df_i \wedge \Omega^{m-1}_P$
of~$\Omega^m_A$ form a standard basis (cf.\ Prop.~\ref{prop:IsomGrOmega}). Then the de Rham cohomology
of such super-strict complete intersections is indeed trivial (see Theorem~\ref{thm:localSSCI}).

In Subsection~3.3 we examine the space $\HdR^0(A)=\Ker(d_A)$ of differential constants of
an Artinian local algebra in detail.  We note that for a super-strict complete intersection
with non-trivial differential constants, the natural conjecture that the socle should be a differential constant,
turns out to be wrong (see Example~\ref{ex:SocEscapeKer}). Then our main result in this subsection,
Theorem~\ref{thm:ZerothDeRham}, provides several formulas and estimates for the dimension of $\HdR^0(A)$.
The most useful ones are a bound by the dimension of the kernel of $\gr(d_A)$
and by $1+\dim_K H^1(\cK^\dot)$, where~$\cK^\dot$ is the kernel of~$\Phi$.
As a consequence, we obtain a hierarchy of conditions which force the differential constants to
consists of the elements of~$K$ only (see Cor.~\ref{cor:H0conditions}).

Based on these local insights, we move to the de Rham cohomology of the coordinate ring
of a 0-dimensional scheme in Section~\ref{sec4}. As mentioned above, for a projective 0-dimensional
scheme both the de Rham cohomology of the homogeneous coordinate ring and its Artinian reduction
are trivial (see Cor.~\ref{cor:DeRhamCohProj} and~\ref{cor:DRofArtRed}). 

For the affine case, we can immediately reduce everything to a scheme whose support
is one point. However, that point need not be $K$-rational. To reduce the investigation even further
to $K$-rational points, we extend the base field and use the Galois splitting (see Prop.~\ref{prop:ReduceToKRat}).
Then we can move the individual $K$-rational points to the origin via linear changes
of coordinates and reach the setting of Section~\ref{sec3}. As a result, we obtain the de Rham
cohomology of fat points schemes, i.e., schemes whose vanishing ideal is of the form
$\M_1^{\nu_1} \cap \cdots \cap \M_s^{\nu_s}$ with maximal ideals $\M_i$ of~$P$ and $\nu_i\ge 1$
(see Prop.~\ref{prop:HdRofFatPoints}). This includes that case of reduced 0-dimensional 
schemes (see Cor.~\ref{cor:dRofPointSets}). Finally, we review and interpret an example of
A.G.\ Alexandrov (cf.~\cite{Ale}) and show by example that 
the de Rham cohomology of the affine coordinate ring may be trivial even if the local rings
of the scheme are not quasi-homogeneous (see Examples~\ref{ex:alex} and~\ref{ex:NonQuasiHomog}).

For the general notation and terminology used here, we refer to the 
books~\cite{KR2000,KR2005,KR2016}. Many of the insights in this paper are supported
by explicitly computed examples.
These examples were calculated using the freely available 
computer algebra system ApCoCoA (see~\cite{ApCoCoA}), but can be readily verified using many other
systems. The source code and output of the examples is available from the authors upon request.

\bigbreak
%
%

\section{The de Rham Cohomology of Finitely Generated Algebras}
\label{sec2}

In the following we let~$K$ be a perfect field and~$R$ 
a finitely generated $K$-algebra. Then $R$ can be 
represented as $R\cong P/I$, where $P=K[x_1, \dots, x_n]$ 
is a polynomial ring over~$K$ and~$I$ is an ideal in~$P$. 

\begin{definition}
Let $\mu:\,  R\otimes R \longrightarrow R$ be the multiplication map and $J=\Ker(\mu)$.
\begin{enumerate}
\item[(a)] The finitely generated $R$-module $\Omega^1_{R/K} = J/J^2$ is called the
{\bf K\"ahler differential module} (or the {\bf module of K\"ahler differentials})
of~$R/K$.

\item[(b)] For every $m\ge 0$, the exterior power
$\Omega^m_{R/K} = \Lambda_R^m \, \Omega^1_{R/K}$ is called the {\bf module of K\"ahler 
differential $m$-forms} of~$R/K$. Note that $\Omega^0_{R/K}=R$.

\item[(c)] The exterior algebra $\Omega^\dot_{R/K} = \bigoplus_{m\ge 0}\Omega^m_{R/K}$ 
is called the {\bf K\"ahler differential algebra} of~$R/K$.
\end{enumerate}
\end{definition}

Unless explicitly stated otherwise, all algebras will be $K$-algebras. Therefore we
usually simplify the notation and write $\Omega^m_R$ instead of $\Omega^m_{R/K}$ for all $m\ge 0$,
as well as $\Omega^\dot_R$ instead of $\Omega^\dot_{R/K}$.
 
The module of K\"ahler differentials of~$R/K$ comes together with the 
{\bf universal derivation} $d_R:\, R \longrightarrow \Omega^1_R$
which is given by $d_R(a) = a \otimes 1 - 1\otimes a + J^2$ for $a\in R$.
If no confusion can arise, we simply write~$d$ for this map.

Recall that $\Omega^\dot_{R/K}$ is the universal differential graded algebra 
of the algebra $R/K$ in the sense of \cite[Def.~3.3]{Kun1986}.  
In particular, the universal derivation extends to the modules of
K\"ahler $m$-forms and yields $K$-linear maps
$$
\delta_m:\; \Omega^m_R \,\To\, \Omega^{m+1}_R \hbox{\;\rm such that\;}
\delta_m (a_0\, da_1 \wedge \cdots \wedge da_m) = da_0 \wedge da_1 \wedge 
\cdots \wedge da_m 
$$  
for $m\ge 0$ and $a_0,\dots,a_m\in R$. Here we have $\delta_0 = d_R$.
Together with the map $\delta^\dot_R = \bigoplus_{m\ge 0} \delta_m$, the $R$-module
$\Omega^\dot_R$ is a {\bf differential graded algebra} (or simply {\bf DG algebra})
and can therefore be viewed as a complex.

\begin{definition}\label{def:DeRham}
Let $R$ be a finitely generated $K$-algebra as above.
\begin{enumerate}
\item[(a)] The complex
$$
0 \;\longrightarrow\; R \;\TTo{d_R}\; \Omega^1_R \;\TTo{\delta_1}\; \Omega^2_R \;\TTo{\delta_2} \;\cdots\; 
\TTo{\delta_{n-1}}\;  \Omega^n_R \;\To\; 0
$$
is called the {\bf de Rham complex} of $R/K$.

\item[(b)] The graded subring $Z(\Omega^\dot_R) = \Ker(\delta^\dot_R)$ of $\Omega^\dot_R$
is called the set of {\bf closed K\"ahler differential $m$-forms}.
  
\item[(c)] The set $B(\Omega^\dot_R) = \Im(\delta^\dot_R)$ is a two-sided graded ideal
in $Z(\Omega^\dot_R)$ (cf.~\cite[Rules 2.2]{Kun1986})
and is called the set of {\bf exact K\"ahler differential $m$-forms}.
  
\item[(d)] The graded $K$-algebra 
$$
\HdR(R/K) \;=\; {\textstyle\bigoplus_{m\ge 0}} \HdR^m(R) \;=\; 
Z(\Omega^\dot_R) / B(\Omega^\dot_R) \;=\; \Ker(\delta^\dot_R) / \Im(\delta^\dot_R)
$$
is called the {\bf de Rham cohomology} of $R/K$. We usually denote it simply by $\HdR(R)$.

\end{enumerate}
\end{definition}
  
For $m\ge 0$, the $m$-th homogeneous component $\HdR^m(R) = Z^m(\Omega^\dot_R) / B^{m-1}(\Omega^\dot_R)
\allowbreak =  \Ker(\delta_m) / \Im(\delta_{m-1})$ is called the {\bf $m$-th de Rham cohomology} of~$R/K$.
The de Rham cohomology of polynomial rings can be described as follows.

\begin{proposition}[De Rham Cohomology of Polynomial Rings]\label{prop:dRpoly}$\mathstrut$\\
Let $P=K[x_1,\dots,x_n]$.
\begin{enumerate}
\item[(a)][Poincar\'e's Lemma]\;
Let $\charac(K)=0$. Then the de Rham cohomology of~$P$ is given by
$\HdR^0(P)=K$ and $\HdR^m(P)=0$ for $m\ge 1$.

\item[(b)] Let $\charac(K)=p>0$. Then we have $\HdR^0(P) = \Ker(d_P) = K[x_1^p, \dots,x_n^p]$.

\item[(c)] Let $\charac(K)=p>0$ and $m\ge 0$. Then the $K$-linear map
$\Phi:\; \Omega^m_P \longrightarrow \HdR^m(P)$ defined by
$$
\Phi(a_0\, da_1 \wedge \cdots \wedge da_m) = a_0^p\, (a_1^{p-1}\, da_1) \wedge \cdots \wedge
(a_m^{p-1}\, da_m) + B^{m-1}(\Omega^\dot_P)
$$ 
is an isomorphism.

\end{enumerate}
\end{proposition}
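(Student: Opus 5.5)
The common starting point for all three parts is that $\Omega^\dot_P$ is the exterior algebra over~$P$ on $dx_1,\dots,dx_n$. It decomposes as a tensor product of complexes
$$
\Omega^\dot_P \;\cong\; \Omega^\dot_{K[x_1]}\otimes_K \cdots \otimes_K \Omega^\dot_{K[x_n]},
$$
and the differential obeys the graded Leibniz rule. I will therefore reduce everything to the one-variable complex $0\to K[x]\to K[x]\,dx\to 0$ and then apply the Künneth formula over the field~$K$.

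For (a), with $\charac(K)=0$, the map $d(x^k)=kx^{k-1}dx$ is surjective onto $K[x]\,dx$ and its kernel is~$K$. So the one-variable complex has cohomology $K$ in degree~$0$ only. Künneth then gives $\HdR^\dot(P)=K$, concentrated in degree~$0$. As an alternative I could use the Euler/homotopy operator: contraction with $\sum x_i\partial_{x_i}$ yields $\iota d+d\iota=(\text{total degree}+m)$ on homogeneous $m$-forms. This operator is invertible in characteristic zero except on constants, which again proves (a).

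For (b), I will argue directly. If $f=\sum c_\alpha x^\alpha$, then $df=\sum_i \partial_i f\,dx_i=0$ if and only if $\alpha_i c_\alpha=0$ for every $i$ and every~$\alpha$. This says $p\mid\alpha_i$ whenever $c_\alpha\neq 0$, that is, $f\in K[x_1^p,\dots,x_n^p]$. Part (b) is also the degree-$0$ case of (c).

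For (c), the first step is the one-variable computation in characteristic~$p$. Here $\Ker d=K[x^p]$ and $\Im d$ is spanned by the $x^{k-1}dx$ with $p\nmid k$. Hence $\HdR^0=K[x^p]$ and $\HdR^1$ is free over $K[x^p]$ on the class of $x^{p-1}dx$. Writing $P^{(p)}=K[x_1^p,\dots,x_n^p]$, Künneth then shows that $\HdR^\dot(P)$ is the exterior algebra over $P^{(p)}$ on the classes $[x_i^{p-1}dx_i]$. This is the Cartier isomorphism, and it requires $K$ perfect so that $c\mapsto c^p$ is bijective.

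The remaining work is to check that the stated map~$\Phi$ is well defined and matches this description. Since $\Omega^m_P$ is defined as an exterior power, I will show that the assignment $a\mapsto[a^{p-1}da]$ is an additive derivation into $\HdR^1$ over the Frobenius map $a_0\mapsto a_0^p$. Then the universal property of $\Omega^\dot_P$ gives an algebra map $\Phi$ defined on all of $\Omega^\dot_P$. I also need that the image lies in $Z$, which holds because $d(a_0^p)=0$ and $d(a^{p-1}da)=0$.

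Multiplicativity, $[(ab)^{p-1}d(ab)]=a^p[b^{p-1}db]+b^p[a^{p-1}da]$, is immediate. The hard step is additivity:
$$
(a+b)^{p-1}d(a+b)-a^{p-1}da-b^{p-1}db \text{ is exact}.
$$
I plan to show it equals $d\bigl(\tfrac{1}{p}((a+b)^p-a^p-b^p)\bigr)$. The polynomial $\tfrac1p\bigl((X+Y)^p-X^p-Y^p\bigr)$ has integer coefficients. The identity holds over $\ZZ[X,Y]$ and can then be specialized to characteristic~$p$.

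Once $\Phi$ exists, it sends the basis elements $x^\alpha dx_{i_1}\wedge\cdots\wedge dx_{i_m}$ to $x^{p\alpha}[x_{i_1}^{p-1}dx_{i_1}]\wedge\cdots$. By the Künneth description these images form a $K$-basis of $\HdR^m(P)$, after using perfectness to absorb $p$-th powers of scalars. So $\Phi$ is bijective; note it is $p$-linear over~$K$ rather than linear, which is where perfectness enters.
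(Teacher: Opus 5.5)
Your proof is correct, but it takes a genuinely different route from the paper, whose proof is entirely by citation. The paper quotes (a) from Hartshorne and (b) from Kunz. For (c) it quotes Brion--Kumar, who establish the Cartier isomorphism over $\overline{K}$, and then descends to $K$ by faithful flatness.

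You instead give a self-contained argument. The decomposition $\Omega^\bullet_P\cong\bigotimes_K\Omega^\bullet_{K[x_i]}$ together with the K\"unneth formula reduces everything to the one-variable complex. For (c) you construct $\Phi$ directly, and the key identity $(a+b)^{p-1}d(a+b)-a^{p-1}da-b^{p-1}db=d\bigl(\tfrac{1}{p}((a+b)^p-a^p-b^p)\bigr)$, transported from $\ZZ[X,Y]$, is exactly what is needed. You then finish by comparing $K$-bases.

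The citation route buys brevity; yours buys transparency about the hypotheses. It shows that (b) holds over any field of characteristic~$p$, and likewise the description of $\HdR^\bullet(P)$ as an exterior algebra over $K[x_1^p,\dots,x_n^p]$ on the classes $[x_i^{p-1}dx_i]$. Perfectness enters only in the surjectivity of $\Phi$. You also correctly note that $\Phi$ is Frobenius-semilinear, $\Phi(c\,\omega)=c^p\Phi(\omega)$ for $c\in K$, rather than $K$-linear as the statement says. Consequently the paper's $\id_{\overline{K}}\otimes_K\Phi$ has to be read as the semilinear extension $\lambda\otimes\omega\mapsto\lambda^p\otimes\Phi(\omega)$, a subtlety your direct basis comparison never has to face.

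One phrase to tighten: the extension of $a\mapsto[a^{p-1}da]$ to all degrees comes from the universal property of $\Omega^1_P$ followed by that of the exterior algebra $\Lambda^\bullet_P\Omega^1_P$. Here the target is $\HdR^\bullet(P)$, made a $P$-algebra via $a\mapsto a^p$, which is legitimate since $d(a^p\omega)=a^p\,d\omega$ and classes of $1$-forms square to zero. It is not the universal property of $\Omega^\bullet_P$ as a DG algebra, because $\Phi$ does not commute with differentials.
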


\begin{proof}
The algebraic version of Poincar\'e's Lemma is, for instance, shown in~\cite[Prop.~7.1]{Har1975}.
Of course, claim~(b) is a special case of~(c). A direct proof is contained in the proof 
of~\cite[Prop.~5.6]{Kun1986}. Finally, in~\cite[Thm.~1.3.4]{BK} it is shown that $\id_{\overline{K}}\otimes_K \Phi$
is an isomorphism, where $\overline{K}$ is the algebraic closure of~$K$.
As $\overline{K}/K$ is faithfully flat, it follows that~$\Phi$ is an isomorphism, as well.
\end{proof}

An easy consequence of this proposition is that in characteristic zero, the highest de Rham
cohomology of an affine algebra vanishes.

\begin{corollary}\label{cor:highestDR}
Assume that $\charac(K)=0$ and $R=P/I$ with an ideal $I\subseteq P$.
Then we have $\HdR^n(R)=0$.
\end{corollary}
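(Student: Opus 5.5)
The plan is to compare the top-degree part of the de Rham complex of $R$ with that of $P$ via the canonical surjection $\pi:\,P\to R$, and then apply Poincar\'e's Lemma (Prop.~\ref{prop:dRpoly}(a)) to $P$.

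The key observation is that $\Omega^1_R$ is generated as an $R$-module by $d\bar x_1,\dots,d\bar x_n$. Hence $\Omega^m_R=0$ for $m>n$, and $\Omega^n_R$ is generated by the single element $d\bar x_1\wedge\cdots\wedge d\bar x_n$. The canonical map $\Omega^\dot_P\to\Omega^\dot_R$ induced by~$\pi$ is a homomorphism of DG algebras. It is surjective in every degree, since every element of $\Omega^m_R$ is a sum of terms $\bar a_0\, d\bar a_1\wedge\cdots\wedge d\bar a_m$, and these lift to $a_0\, da_1\wedge\cdots\wedge da_m$. It also commutes with the exterior derivatives, because both $\delta_m$ are given by the same formula $a_0\,da_1\wedge\cdots\wedge da_m\mapsto da_0\wedge\cdots\wedge da_m$.

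Now take an arbitrary $\omega\in\Omega^n_R$. Since $\Omega^{n+1}_R=0$, every such $\omega$ is closed, so we must show that $\omega$ is exact. Lift $\omega$ to some $\eta\in\Omega^n_P$. Because $\Omega^{n+1}_P=0$, the form $\eta$ is closed in $\Omega^\dot_P$. By Poincar\'e's Lemma, $\HdR^n(P)=0$ for $n\ge1$, so $\eta=\delta_{n-1}(\theta)$ for some $\theta\in\Omega^{n-1}_P$. Applying the DG-homomorphism then gives $\omega=\delta_{n-1}(\bar\theta)$, where $\bar\theta$ is the image of $\theta$. Hence $\HdR^n(R)=0$.

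The case $n=0$ is degenerate: then $P=K$, and the statement must be read with the convention $n\ge1$. One could also make the exactness explicit by writing $\eta=f\,dx_1\wedge\cdots\wedge dx_n$ and taking $\theta=F\,dx_2\wedge\cdots\wedge dx_n$ with $\partial F/\partial x_1=f$, which is possible in characteristic zero.

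The only point needing care is the compatibility of the induced map with the differentials, together with its surjectivity in degree~$n$. Both follow from the universal property of $\Omega^\dot_P$ as the universal DG algebra. No step is a real obstacle.
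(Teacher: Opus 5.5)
Your proposal is correct and is essentially the paper's own argument: the paper uses the same commutative square, with the surjection $\Omega^n_P\to\Omega^n_R$ and the surjectivity of $\delta_{n-1}$ on $P$ (Poincar\'e's Lemma), to conclude that $\delta_{n-1}$ on $R$ is surjective. Your remark that the statement needs $n\ge 1$ makes explicit an assumption the paper leaves implicit.
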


\begin{proof}
In the commutative diagram
$$
\begin{matrix}
\Omega^{n-1}_P & \longrightarrow & \Omega^n_P & \longrightarrow & 0\\
\downarrow && \;\downarrow\epsilon && \\
\Omega^{n-1}_R & \longrightarrow & \Omega^n_R && 
\end{matrix}
$$
the map $\epsilon:\; \Omega^n_P = P dx_1 \wedge \cdots \wedge dx_n \longrightarrow
\Omega^n_R = (P/( I + \langle \frac{\partial f}{\partial x_i} \mid f\in I \rangle))\, d\bar{x}_1
\wedge \cdots \wedge d\bar{x}_n$ is surjective. By Poincar\'e's Lemma, the map in the first row is surjective.
Therefore also the map in the second row is surjective, i.e., we have $\HdR^n(R) = 0$.
\end{proof}

The functorial properties of K\"ahler differential algebras imply that the de Rham cohomology
has good functorial properties. In particular, we are going to use the following results later on.

\begin{proposition}\label{prop:functorial}
Let $R$ be a finitely generated $K$-algebra as above.
\begin{enumerate}
\item[(a)] Given a field extension $K\subseteq L$, we have $\HdR(L\otimes_K R) \cong
L \otimes_K \HdR(R)$.

\item[(b)] Suppose that $A_1, \dots, A_s$ are finitely generated $K$-algebras such that
$R = A_1 \times \cdots \times A_s$. Then there is an isomorphism of graded $K$-algebras
$$
\HdR(R) \cong \HdR(A_1) \times \cdots \times \HdR(A_s).
$$  
\end{enumerate}
\end{proposition}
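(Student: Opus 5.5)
For part~(a), I would first establish base change for the K\"ahler differential algebra: the canonical map $L\otimes_K \Omega^m_{R/K} \to \Omega^m_{L\otimes_K R/L}$ is an isomorphism for every $m\ge 0$. The cleanest route is through presentations. If $R=P/I$, then $L\otimes_K R = L[x_1,\dots,x_n]/IL[x_1,\dots,x_n]$. The module $\Omega^1_{R/K}$ is the cokernel of the Jacobian-type map $I/I^2 \to R\otimes_P \Omega^1_{P/K}$, and the same description holds over~$L$ with the ideal $IL[x]$. Since $L$ is flat over~$K$, tensoring the presentation with~$L$ gives the presentation over~$L$. The generators $f\in I$ still generate $IL[x]$, and $df$ commutes with scalar extension. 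Exterior powers commute with base change, so the isomorphism extends to all~$m$.

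Next I would check that these isomorphisms are compatible with the differentials. The map $\id_L\otimes\delta_m$ is sent to $\delta_m^{L\otimes R}$ because both are determined by $a_0\,da_1\wedge\cdots\wedge da_m \mapsto da_0\wedge\cdots\wedge da_m$ on elements $1\otimes a_i$. Both maps are $L$-linear: on the base-changed side, $L$ consists of constants for $L$-derivations. So the de Rham complex of $L\otimes_K R$ over~$L$ is $L\otimes_K$ (the de Rham complex of $R$). Because $L$ is flat over~$K$, the functor $L\otimes_K -$ is exact and commutes with kernels, images and quotients, which gives $\HdR(L\otimes_K R)\cong L\otimes_K\HdR(R)$. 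The isomorphism is one of graded algebras, since the wedge product is preserved. The main obstacle here is being careful that $\Omega$ on the left means $\Omega_{(L\otimes R)/L}$ and not $\Omega_{/K}$; over~$K$ the statement would fail for transcendental~$L$.

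For part~(b), I would use the idempotents $e_1,\dots,e_s$ of $R$ corresponding to the factors. First I would show that $\Omega^1_R\cong\bigoplus_i \Omega^1_{A_i}$. From $e_i^2=e_i$ we get $de_i = 2e_i\,de_i$, hence $e_i\,de_i = 2e_i\,de_i$, so $e_i\,de_i=0$ and therefore $de_i=0$. This already shows $d(e_i a)=e_i\,da$, so $d$ respects the decomposition. Alternatively, I would argue that K\"ahler differentials commute with finite products, since localization $R\to R_{e_i}=A_i$ satisfies $\Omega_{R_{e_i}} = (\Omega_R)_{e_i}$. Then $\Omega^\dot_R=\prod_i e_i\Omega^\dot_R\cong\prod_i\Omega^\dot_{A_i}$ as DG algebras: exterior powers over a product ring split componentwise, and the mixed terms $e_i\,\omega\wedge e_j\,\eta$ with $i\neq j$ vanish. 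The differential preserves each factor because $de_i=0$. Homology commutes with finite products of complexes, which yields the stated isomorphism of graded $K$-algebras.
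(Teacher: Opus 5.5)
Your proof is correct. Part (a) follows the paper's route. The paper cites Kunz (Cor.~4.3) for $\Omega^\dot_{L\otimes_K R/L}\cong L\otimes_K\Omega^\dot_{R/K}$, with $\delta^\dot_{L\otimes_K R}$ corresponding to $\id_L\otimes_K\delta^\dot_R$, and then uses flatness of $L/K$. You instead derive the base-change isomorphism from a presentation and check the differentials on $L$-spanning elements. Right exactness of $L\otimes_K-$ already suffices for the base change; flatness is only needed when passing to homology.

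For (b) the paper again only cites Kunz (Prop.~4.7). The decomposition it writes, $\Omega^\dot_R\cong\Omega^\dot_{A_1}\otimes\cdots\otimes\Omega^\dot_{A_s}$, is the K\"unneth-type formula belonging to $A_1\otimes_K\cdots\otimes_K A_s$, not to a direct product. Your idempotent argument gives the decomposition that is actually needed, $\Omega^\dot_R=\prod_i e_i\Omega^\dot_R\cong\prod_i\Omega^\dot_{A_i}$ as DG algebras, and it is self-contained.

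One point to tighten: $de_i=0$ by itself only shows that $\delta^\dot_R$ respects the splitting $\prod_i e_i\Omega^\dot_R$. It does not identify $e_i\Omega^1_R$ with $\Omega^1_{A_i}$. That identification is supplied either by your localization remark, $(\Omega^1_R)_{e_i}=e_i\Omega^1_R\cong\Omega^1_{A_i}$, where the comparison map is the DG algebra homomorphism induced by the projection $\pi_i\colon R\to A_i$ and so commutes with differentials automatically. Or it can come from the direct construction of the derivation $A_i\to e_i\Omega^1_R$, $\pi_i(a)\mapsto e_i\,da$, which is well defined precisely because $de_i=0$. So the localization step is not an optional alternative; it is the part of your argument that carries the identification.
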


\begin{proof}
To show~(a), we note that $\Omega^\dot_{L\otimes_K R/L} \cong L\otimes_K  
\Omega^\dot_{R/K}$ and that $\delta^\dot_{L\otimes_K R}$ corresponds to
$\id_L\otimes_K \delta_R$ under this isomorphism (cf.~\cite[Cor.~4.3]{Kun1986}).
Thus the claim follows from the fact that the base change $K\subseteq L$ is flat.

Claim~(b) follows from $\Omega^\dot_R \cong \Omega^\dot_{A_1} \otimes \cdots \otimes
\Omega^\dot_{A_s}$ and the corresponding decomposition of~$\delta^\dot_R$ 
(cf.~\cite[Prop.~4.7]{Kun1986}).  
\end{proof}

Next we consider the case when $R = \bigoplus_{i\ge 0} R_i$ is a finitely generated,
positively graded $K$-algebra. This means that there is a presentation $R=P/I$,
where $P$ is positively graded by $\deg(x_i)\in \NN_+$ for $i=1,\dots,n$, 
and where~$I$ is a homogeneous ideal in~$P$. Ideals which are homogeneous with respect to a positive
$\ZZ$-grading on~$P$ are sometimes called {\bf quasi-homogeneous}.
For simplicity, we denote the residue classes of $x_1,\dots,x_n$ by $x_1,\dots,x_n$ again, 
if no confusion arises. In this setting, a useful tool to study the de Rham cohomology is the following complex.

\begin{definition}[The Euler-Koszul Complex]\label{def:Euler}\\
Let $R= \bigoplus_{i\ge 0} R_i$ be a finitely generated, positively graded $K$-algebra as above, 
and let $\m = \langle x_1,\dots,x_n\rangle$
be the ideal generated by the residue classes of the indeterminates of~$P$ in~$R$.
\begin{enumerate} 
\item[(a)] The {\bf Euler derivation} $e_R:\; R \longrightarrow R$ given by $e_R(a) = ia$ for $a\in R_i$
induces via the universal property of $\Omega^1_R$ an $R$-linear map $\epsilon_R:\; \Omega^1_R \longrightarrow \m$
which satisfies $\epsilon_R(dx_i)=x_i$ for $i=1,\dots,n$. This map is called the {\bf Euler form} 
on~$\Omega^1_R$.

\item[(b)] The Koszul complex of the Euler form is given by
$$ 
0 \;\longrightarrow\; \Omega^n_R \;\TTo{\epsilon_n}\;
\cdots \; \TTo{\epsilon_3}\;  \Omega^2_R  \; \TTo{\epsilon_2}\;
\Omega^1_R \;\TTo{\epsilon_1}\; \m \;\To\; 0 
$$
where $\epsilon_1 = \epsilon_R$ and $\epsilon_i$ satisfies
$$
\epsilon_i(w_1 \wedge \cdots \wedge w_i) = \tsum_{j=1}^i
(-1)^{j-1} \epsilon(w_j)\, w_1 \wedge \cdots \wedge \widehat{w_j} \wedge
\cdots \wedge w_i
$$
for $i\ge 2$ and $w_1,\dots,w_i \in \Omega^1_R$. 
This complex is called the {\bf Euler-Koszul complex} of~$R$.
The map $\epsilon = \bigoplus_i \epsilon_i$ is called the {\bf Euler contraction}
or the {\bf interior product} with the Euler vector field in $\Omega^\dot_R$.

\end{enumerate}
\end{definition}

If the characteristic of~$K$ is zero, the de Rham cohomology of positively graded $K$-algebras
is known to be trivial (see for instance~\cite{Wei}, Cor.~9.9.3). The following {\it folklore}
proposition provides a more detailed picture.

\begin{proposition}[De Rham Cohomology of Positively Graded Rings]\label{prop:dRgraded}$\mathstrut$\\
Let $R=P/I$ be a positively graded $K$-algebra, where $P=K[x_1,\dots,x_n]$ is positively graded by $\deg(x_i)
\in \NN_+$ for $i=1,\dots,n$, and where~$I$ is a homogeneous ideal in~$P$.
\begin{enumerate}  
\item[(a)] For $m\ge 1$ and a homogeneous element $w\in \Omega^m_R$, we have
$$
  (\delta_{m-1}\circ\epsilon_m + \epsilon_{m+1}\circ\delta_m)(w) = \deg(w)\cdot w
$$
This is occasionally called {\bf Cartan's magic formula}.
  
\item[(b)] If $\charac(K)=0$ then $\HdR^0(R)=K$ and $\HdR^m(R)=0$ for $m\ge 1$.
  
\item[(c)] If $\charac(K)= p > 0$ and $p\nmid i$ then $\HdR^m(R)_i=0$ for all $m\ge 1$.
 
\end{enumerate}
\end{proposition}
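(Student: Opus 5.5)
The plan is to prove (a) first, since (b) and (c) follow from it formally.

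For (a), recall that the Euler contraction $\epsilon=\bigoplus_i\epsilon_i$ is, by construction, the interior product with the derivation $e_R$. It is therefore an antiderivation of degree $-1$ on the exterior algebra $\Omega^\dot_R$:
$$
\epsilon(w\wedge w')=\epsilon(w)\wedge w'+(-1)^{|w|}\,w\wedge\epsilon(w').
$$
The exterior derivative $\delta^\dot_R$ is an antiderivation of degree $+1$. Their graded commutator $L=\delta\circ\epsilon+\epsilon\circ\delta$ is then an (even) derivation of degree $0$ of $\Omega^\dot_R$, and it commutes with $\delta$. Consequently $L$ is determined by its values on $R$ and on the forms $dx_i$, since every element is a sum of terms $a_0\,dx_{i_1}\wedge\cdots\wedge dx_{i_m}$.

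On $a\in R_i$ we have $L(a)=\epsilon(da)=e_R(a)=ia=\deg(a)\,a$. On $dx_i$ we have $L(dx_i)=\delta(\epsilon(dx_i))=dx_i$ and $\deg(dx_i)=\deg(x_i)$; more generally $L(da)=\delta L(a)=\deg(a)\,da$. Here $\Omega^\dot_R$ is graded by $\deg(a_0\,da_1\wedge\cdots\wedge da_m)=\sum\deg(a_j)$, and $\delta$ and $\epsilon$ preserve this grading. The map $w\mapsto\deg(w)\,w$ is also a derivation, and it agrees with $L$ on the generators. Hence $L(w)=\deg(w)\,w$ for all homogeneous $w$.

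The main technical point is that the maps $\epsilon_m$ and $\delta_m$ are well defined on $\Omega^m_R$, which is a quotient of $\Omega^m_P$. One option is to verify the identity on $\Omega^\dot_P$, where everything is free, and observe that both operators preserve the relation submodule $I\,\Omega^m_P+dI\wedge\Omega^{m-1}_P$, because $I$ is homogeneous and $e_P(I)\subseteq I$. Alternatively, one can simply use that the formula holds on $\Omega_P$ and descends.

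For (b), let $m\ge 1$ and let $w\in\Omega^m_R$ be closed. Decompose $w$ into homogeneous components; since $\delta$ preserves degree, each component is closed. A homogeneous closed form $w$ of degree $k$ has $k\ge 1$, because $m$-forms are generated by $dx_{i_1}\wedge\cdots$ of positive degree. By (a), $w=\delta\bigl(k^{-1}\epsilon(w)\bigr)$, which is exact. For $m=0$, a homogeneous $a\in R_k$ with $da=0$ satisfies $ka=\epsilon(da)=0$, so $a=0$ if $k>0$. Hence $\Ker d_R=R_0=K$.

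For (c), the same argument applies in degree $i$ whenever $i$ is invertible in $K$, that is, whenever $p\nmid i$.
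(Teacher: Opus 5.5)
Your proof is correct. Parts (b) and (c) follow the paper exactly: you split a closed form into homogeneous components, write $w=\delta_{m-1}\bigl(\deg(w)^{-1}\epsilon_m(w)\bigr)$, and use $\epsilon(da)=\deg(a)\,a$ in degree $0$.

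For (a) your route is different. The paper checks the identity by direct computation on forms $f\,dx_{i_1}\wedge\cdots\wedge dx_{i_m}$. It computes $\epsilon_{m+1}\delta_m(w)$ and $\delta_{m-1}\epsilon_m(w)$ separately with the Koszul sign rule and adds them. You instead observe that $L=\delta\epsilon+\epsilon\delta$ is an even degree-$0$ derivation commuting with $\delta$. Since $w\mapsto\deg(w)\,w$ is also such a derivation, you only need to compare the two on $R$ and on $dR$.

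Your argument is more conceptual, and it handles arbitrary positive weights uniformly, because it only uses $\epsilon(da)=e_R(a)$. The paper's computation uses $\epsilon(dx_i)=x_i$ and $\deg(dx_{i_1}\wedge\cdots\wedge dx_{i_m})=m$, so as written it covers the standard grading. With weights one needs $\epsilon(dx_i)=\deg(x_i)\,x_i$, and the same computation then goes through. The paper's route, in exchange, needs no general facts about graded commutators of antiderivations.

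One slip to fix: your sentence ``$L(dx_i)=\delta(\epsilon(dx_i))=dx_i$'' holds only when $\deg(x_i)=1$. The universal property gives $\epsilon(dx_i)=e_R(x_i)=\deg(x_i)\,x_i$, hence $L(dx_i)=\deg(x_i)\,dx_i$. This is what your next formula $L(da)=\delta L(a)=\deg(a)\,da$ correctly says, and that formula is all you need.

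Your worry about well-definedness is also unnecessary. $\epsilon$ is the Koszul differential of the $R$-linear form $\epsilon_R$ on the exterior algebra $\Lambda^\dot_R\,\Omega^1_R$. $\delta$ comes from the universal DG-algebra structure. So both maps live on $\Omega^\dot_R$ directly, with no descent from $\Omega^\dot_P$ required.
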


\begin{proof}
The proof of~(a) is well-known. For the convenience 
of the readers, we work out the details. Let $m\in \{1,\dots,n\}$, and let $w\in \Omega^m_R$
be a homogeneous element of degree $m+k$ of the form $w= f\, dx_{i_1} \wedge \cdots \wedge dx_{i_m}$,
where $f\in R_k$ and $1\le i_1 < \cdots < i_m \le n$. 
Using $\epsilon(df)=e_R(f) = k\,f$ and~\cite[Prop.~1.6.2.d]{BH1993}, we calculate
\begin{align*}
(\epsilon_{m+1}\circ \delta_m)(w) &\;=\; \epsilon_{m+1}( df \wedge dx_{i_1} \wedge \cdots \wedge dx_{i_m} )\\
&\;=\; k\,f\, dx_{i_1}\wedge\cdots\wedge dx_{i_m} - df \wedge \epsilon_m(dx_{i_1}\wedge\cdots\wedge dx_{i_m})
\end{align*}
On the other hand, we have
\begin{align*}
(\delta_{m-1} \circ \epsilon_m)(w) &\;=\; \delta_{m-1}( f \cdot \tsum_{j=1}^m (-1)^{j-1}\, x_{i_j}\,
dx_{i_1} \wedge \cdots \wedge \widehat{dx_{i_j}} \wedge \cdots \wedge dx_{i_m} )\\
&\;=\; df \wedge \epsilon_m ( dx_{i_1} \wedge \cdots \wedge dx_{i_m} )
+ m\, f\, dx_{i_1} \wedge \cdots \wedge dx_{i_m}
\end{align*}
Altogether, we get $(\delta_{m-1}\circ\epsilon_m + \epsilon_{m+1}\circ\delta_m)(w) = (k+m)\,w = 
\deg(w)\cdot w$, as claimed.

To show~(b), we can use $\Omega^m_R=0$ for $m\ge n+1$ to conclude that
$\HdR^m(R)=0$ for $m\ge n+1$. For $m\in \{1,\dots,n\}$, we choose a homogeneous element
$w\in Z^m(\Omega^\dot_R)$. Then we let $w' = \frac{1}{\deg(w)}\, \epsilon_m(w) \in\Omega^{m-1}_R$.
Using~(a), we get $w = \delta_{m-1}(w') \in B^m(\Omega^\dot_R)$. This shows $\HdR^m(R)=0$.
Finally, we note that $\HdR^0(R) = \Ker(d_R) = K$, since $df=0$ implies $\epsilon(df)=k\,f=0$, and therefore
$f=0$ or $k=0$ for every $f\in R_k$.

It remains to prove~(c). As in the proof of~(b), it suffices to consider the case $m\in \{1,\dots,n\}$.
Let $w\in Z^m(\Omega^\dot_R)$ be a homogeneous element of degree~$k$ such that $p\nmid k$.
Then the element $w' = \frac{1}{k}\, \epsilon_m(w)$ satisfies $\delta_{m-1}(w')=w$, and therefore
we obtain $\HdR^m(R)=0$.
\end{proof}

In fact, we can weaken the assumptions of this proposition to some extent, since it suffices that
the ring~$R$ is a \textbf {quasi-homogeneous ring}, i.e., isomorphic to a positively graded $K$-algebra 
as in the proposition. Let us formulate this as an explicit corollary, since we will use it later.

\begin{corollary}[De Rham Cohomology of Quasi-Homogeneous Rings]\label{cor:HdRQuasiHomog}$\mathstrut$\\
Let $P=K[x_1,\dots,x_n]$ be positively graded by $\deg(x_i) \in \NN_+$ for $i=1,\dots,n$,
let~$I$ be a homogeneous ideal in~$P$, and let $\phi:\; P \longrightarrow P$ be a $K$-algebra
homomorphism. If $\charac(K)=0$ then the de Rham cohomology of the ring $R=P/\phi(I)$
satisfies $\HdR^0(R)=K$ and $\HdR^m(R)=0$ for $m\ge 1$.
\end{corollary}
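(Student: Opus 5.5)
The plan is to reduce the statement directly to Proposition~\ref{prop:dRgraded}(b) by showing that the de Rham cohomology is an invariant of the $K$-algebra isomorphism class of~$R$. The statement is meant in the sense of the preceding remark, i.e., $R$ is quasi-homogeneous. So I will read $\phi$ as a $K$-algebra automorphism of~$P$ and $\phi(I)$ as the ideal $\phi(I)=\phi(I)P$ of~$P$; for bijective~$\phi$ the set $\phi(I)$ is automatically an ideal. Some such reading is needed, because for a non-injective $\phi$ (e.g.\ one sending all $x_i$ to constants) the ring $P/\phi(I)P$ has no reason to be related to $P/I$. In the proof I will make this assumption explicit.

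First step: $\phi$ induces a $K$-algebra isomorphism $\psi:\; P/I \To R=P/\phi(I)$ given by $f+I\mapsto \phi(f)+\phi(I)$. It is well-defined and surjective because $\phi$ is, and injective because $\phi(f)\in\phi(I)$ implies $f\in I$ by bijectivity of~$\phi$. Its inverse is induced by $\phi^{-1}$.

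Second step: functoriality of the K\"ahler differential algebra. Since $\Omega^\dot_{R}$ is the universal DG algebra of $R/K$ (cf.~\cite[Def.~3.3]{Kun1986}), every $K$-algebra homomorphism $\psi: S\to R$ extends uniquely to a homomorphism of DG algebras $\Omega^\dot_\psi:\Omega^\dot_S\to\Omega^\dot_R$. This extension satisfies $\Omega^\dot_\psi(a_0\,da_1\wedge\cdots\wedge da_m)=\psi(a_0)\,d\psi(a_1)\wedge\cdots\wedge d\psi(a_m)$ and commutes with the exterior differentials. By uniqueness, $\Omega^\dot_{\psi^{-1}}$ is inverse to $\Omega^\dot_\psi$. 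Hence $\Omega^\dot_\psi$ is an isomorphism of complexes, and it induces an isomorphism of graded $K$-algebras $\HdR(P/I)\cong \HdR(R)$. In degree~$0$, it maps the constants $K\subseteq \Ker(d_{P/I})$ identically onto $K\subseteq\Ker(d_R)$.

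Third step: apply Proposition~\ref{prop:dRgraded}(b) to the positively graded algebra $P/I$, where $\charac(K)=0$. This gives $\HdR^0(P/I)=K$ and $\HdR^m(P/I)=0$ for $m\ge1$, and transporting these along the isomorphism finishes the proof.

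There is no real obstacle. The only point needing care is the interpretation of $\phi(I)$ and the bijectivity of~$\phi$, which is exactly what makes $\psi$ an isomorphism. Everything else is formal functoriality of $\Omega^\dot$.
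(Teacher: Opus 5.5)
Your proof is correct and matches the paper's intended argument. The paper gives no separate proof, only the preceding remark that it suffices for $R$ to be isomorphic to a positively graded algebra, so that Proposition~\ref{prop:dRgraded}(b) transfers along the isomorphism $P/I\cong P/\phi(I)$ by functoriality of the K\"ahler differential algebra. Reading $\phi$ as an automorphism is also what the paper intends (compare Example~\ref{ex:NonQuasiHomog}), and it is genuinely necessary: take the injective but non-surjective $\phi$ with $\phi(x)=x^4+y^5+xy^5$, $\phi(y)=x^6$, and let $I=\langle x,y\rangle$; then the ideal generated by $\phi(I)$ gives the ring of Example~\ref{ex:KerD3dim}, whose $\HdR^0$ is $3$-dimensional.
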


Now we return to the general setting. So, let $R=P/I$ be again an arbitrary finitely generated
$K$-algebra, where $P=K[x_1,\dots,x_n]$ and~$I$ is an ideal in~$P$.

\begin{proposition}\label{prop:diagram}
Let $I \subseteq J$ be two ideals in~$P$, let $R=P/I$, let $S=P/J$, let
$\phi:\; R \longrightarrow S$ be the canonical epimorphism, let $\bar{J}$ be the residue class ideal
of~$J$ in~$R$, and let $m\ge 1$.
\begin{enumerate}
\item[(a)] There is a commutative diagram with exact rows
\begin{alignat*}{6}
0 \;\To\;\;& \bar{J} \Omega^m_R     &{}+{}& d\bar{J} \wedge \Omega^{m-1}_R &\;\To\; 
        &\;\Omega^m_R &&\;\TTo{\phi_m}\; &&\Omega^m_S &&\;\To\; 0\\
&& \downarrow\;&\hat{\delta}_m && \;\downarrow\delta_m  && && \downarrow\bar{\delta}_m &&\\
0 \;\To\;\;& \bar{J} \Omega^{m+1}_R &{}+{}& d\bar{J} \wedge \Omega^m_R &\;\To\; 
        &\;\Omega^{m+1}_R &&\;\TTo{\phi_{m+1}}\; &&\Omega^{m+1}_S &&\;\To\; 0\\
\end{alignat*}

\item[(b)] We have $\bar{J} \Omega^m_R + d\bar{J} \wedge \Omega^{m-1}_R = d\bar{J} \wedge \Omega^{m-1}_R$.

\item[(c)] We have $\phi_{m+1}(\Im(\delta_m)) = \Im(\bar{\delta}_m)$.

\item[(d)] Assume that $\HdR^m(R)=0$. Then we have $\HdR^m(S)=0$ if and only if the induced map
$\Coker(\hat{\delta}_m) \longrightarrow \Coker(\delta_m)$ is injective.

\end{enumerate}
\end{proposition}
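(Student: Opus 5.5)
The plan is to derive everything from the standard presentation of Kähler forms of a quotient. For~(a), I would first recall that for $S=R/\bar J$ the conormal sequence gives $\Omega^1_S\cong\Omega^1_R/(\bar J\,\Omega^1_R+d\bar J\, R)$. Passing to exterior powers, I get $\Omega^m_S\cong\Omega^m_R/(\bar J\,\Omega^m_R+d\bar J\wedge\Omega^{m-1}_R)$. This is the standard fact that the kernel of $\Lambda^m M\to\Lambda^m(M/N)$ is $N\wedge\Lambda^{m-1}M$, combined with base change $R\to S$ (the kernel of $\Lambda^m_R M\to\Lambda^m_S(S\otimes M)$ is $\bar J\Lambda^m M$). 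This gives exactness of both rows, with $\phi_m$ the natural surjection.

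Next I check that $\delta_m$ maps the submodule $\bar J\Omega^m_R+d\bar J\wedge\Omega^{m-1}_R$ into the corresponding one in degree $m+1$, so that $\hat\delta_m$ is the restriction. Using the Leibniz rule, for $a\in\bar J$ and $w\in\Omega^m_R$ we have $\delta(aw)=da\wedge w+a\,\delta w$. For $\eta\in\Omega^{m-1}_R$ we have $\delta(da\wedge\eta)=-da\wedge\delta\eta$, which lies in $d\bar J\wedge\Omega^m_R$. Commutativity of the right square, $\bar\delta_m\circ\phi_m=\phi_{m+1}\circ\delta_m$, holds because $\phi_\dot$ is a DG-algebra map: it sends $a_0\,da_1\wedge\cdots\wedge da_m$ to $\phi(a_0)\,d\phi(a_1)\wedge\cdots$. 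Both sides agree on these generators, and $\bar\delta_m$ is then well defined by the previous inclusion.

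For~(b), take $a\in\bar J$ and $w=b\,db_1\wedge\cdots\wedge db_m$. Then $a\,w=(ab)\,db_1\wedge\cdots$, and I rewrite
\[
ab\,db_1=d(ab\,b_1)-b_1\,d(ab).
\]
Both $ab\,b_1$ and $ab$ lie in $\bar J$, so $aw\in d\bar J\wedge\Omega^{m-1}_R$ since $m\ge1$. The reverse inclusion is trivial.

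For~(c), I use that $\phi_m$ is surjective and the square commutes: $\phi_{m+1}(\delta_m(\Omega^m_R))=\bar\delta_m(\phi_m(\Omega^m_R))=\Im\bar\delta_m$.

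For~(d), I would apply the snake lemma to the diagram in~(a), viewed as a map of short exact sequences of $K$-vector spaces with vertical maps $\hat\delta_m,\delta_m,\bar\delta_m$. A direct argument is cleaner, though. Assume $\HdR^m(R)=0$ and let $\bar w\in\Ker\bar\delta_m$. I lift it to $w\in\Omega^m_R$; then $\delta_m w$ lies in the submodule $N_{m+1}=\bar J\Omega^{m+1}_R+d\bar J\wedge\Omega^m_R$. Its class in $\Coker\hat\delta_m$ maps to $0$ in $\Coker\delta_m$. If the induced map is injective, then $\delta_m w=\hat\delta_m(u)$ with $u\in N_m$, so $w-u$ is closed, hence exact, and $\bar w=\phi_m(w-u)$ is exact by~(c).

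Conversely, suppose $\HdR^m(S)=0$ and take $v\in N_{m+1}$ with $v=\delta_m(w)$ for some $w\in\Omega^m_R$. Then $\bar\delta_m\phi_m(w)=\phi_{m+1}(v)=0$, so $\phi_m(w)=\bar\delta_{m-1}(\bar y)$. Lifting $\bar y$ to $y$ using the surjectivity from~(a) in degree $m-1$ (for $m-1=0$, just lift along $R\to S$), I get $w-\delta_{m-1}y\in N_m$ and $\hat\delta_m(w-\delta_{m-1}y)=v$, which gives injectivity. I note that only this direction uses no hypothesis on $\HdR^m(R)$.

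The main obstacle is the careful identification of $\Omega^m_S$ as a quotient in~(a), and checking well-definedness. The rest is diagram chasing.
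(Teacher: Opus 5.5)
Your proposal is correct and follows the paper's proof essentially step by step. Part (a) uses the presentation $\Omega^m_S\cong\Omega^m_R/(\bar J\,\Omega^m_R+d\bar J\wedge\Omega^{m-1}_R)$, which is what the paper's citation of Kunz, Prop.~4.12 provides; (b) uses the same Leibniz rewriting; (c) uses surjectivity plus commutativity; and (d) uses the same two diagram chases, where you unroll by hand the snake-lemma connecting homomorphism that the paper invokes for the ``if'' direction, and you additionally check $\delta_m(N_m)\subseteq N_{m+1}$ explicitly, which the paper leaves implicit.
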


\begin{proof}
First we show~(a).
To construct the two exact sequences in~(a), we use~\cite[Prop.~4.12]{Kun1986}.
The maps $\delta_m$ and $\bar{\delta}_m$ are the $m$-th components of the universal differentials
of~$\Omega^\dot_R$ and $\Omega^\dot_S$, respectively, whence the square on the right commutes.
The map $\hat{\delta}_m$ is induced by~$\delta_m$, and thus the square on the left commutes as well.

Next we prove~(b). The $R$-module $\Omega^m_R$ is generated by the elements $da_{i_1} \wedge \cdots\wedge da_{i_m}$
with $a_{i_1},\dots,a_{i_m}\in R$. For $f\in \bar{J}$, we calculate
$$
f da_{i_1} \wedge \cdots \wedge fda_{i_m} \;=\;  d(fa_{i_1}) \wedge da_{i_2} \wedge \cdots \wedge da_{i_m} -
a_{i_1}\, df \wedge da_{i_2} \wedge \cdots \wedge da_{i_m} 
$$
and this proves the claim.

For the proof of~(c), we first choose $w \in \Im(\delta_m)$ and write $w = \delta_m(w')$
with $w'\in \Omega^m_R$. Then $\phi_{m+1}(w) = \bar{\delta}_m ( \phi_m (w')) \in \Im(\bar{\delta}_m)$
proves the inclusion ``$\subseteq$''. Conversely, let $w \in \Im(\bar{\delta}_m)$ and write 
$w = \bar{\delta}_m(w')$ with $w'\in\Omega^m_S$. Since $\phi_m$ is surjective, we find $w'' \in \Omega^m_R$
such that $w' = \phi_m(w'')$. Then $w = \bar{\delta}_m (\phi_m(w'')) = \phi_{m+1} (\delta_m(w'')) 
\in \phi_{m+1} (\Im(\delta_m))$ proves the claim.

It remains to prove~(d). Suppose that $\HdR^m(S)=0$ and choose $w \in \bar{J} \Omega^{m+1}_R + 
d\bar{J} \wedge \Omega^m_R$ such that the image of $w + \Im(\hat{\delta}_m)$ in $\Coker(\delta_m)$
is zero. By~(b), we may assume that $w\in d\bar{J} \wedge \Omega^m_R$. Since the residue class of~$w$
in $\Coker(\delta_m)$ is zero, we find an element $w'\in \Omega^m_R$ such that $w= \delta_m(w')$.
The fact that $\phi_m(w) \in \Ker(\bar{\delta}_m)$ and the hypothesis $\HdR^m(S)=0$ imply that there
exists an element $v\in \Omega^{m-1}_S$ such that $\bar{\delta}_{m-1}(v)= \phi_m(w)$.
Hence we find an element $v' \in \Omega^{m-1}_R$ with $\phi_{m-1}(v')=v$. Then 
$w'' = \delta_{m-1}(w'-v')$ satisfies $\phi_m(w'') = \phi_m(w) - \phi_m(\delta_{m-1}(v')) = 
\phi_m(w) - \bar{\delta}_{m-1}(v) = 0$. Consequently, we have $w'' \in d\bar{J}\wedge \Omega^{m-1}_R$
and $\hat{\delta}_m(w'') = \delta_m(w'') = w$. Thus the residue class of~$w$ in $\Coker(\hat{\delta}_m)$
is zero.

Conversely, assume that the map $\imath:\; \Coker(\hat{\delta}_m) \longrightarrow \Coker(\delta_m)$ is injective
and choose $w\in \Ker(\bar{\delta}_m)$. We have to show that $w\in \Im(\bar{\delta}_{m-1})$.
Let $\eta:\; \Ker(\bar{\delta}_m) \longrightarrow \Coker(\hat{\delta}_m)$ be the connecting 
homomorphism of the Snake Lemma. Then $\eta(w) \in \Ker(\imath)$ implies $\eta(w)=0$, and hence 
$w = \phi_m(w')$ with $w'\in \Ker(\delta_m)$. Next, the hypothesis $\HdR^m(R)=0$ yields an element
$w''\in \Omega^{m-1}_R$ such that $w' = \delta_{m-1}(w'')$. Consequently, we get
$w = \phi_m(\delta_{m-1}(w'')) = \bar{\delta}_{m-1}(\phi_{m-1}(w'')) \in \Im(\bar{\delta}_{m-1})$,
as we wanted to show. 
\end{proof}

Recall that~$P$ is equipped with the {\bf degree filtration} $\cD = (\cD_i)_{i \in\ZZ}$
which satisfies $\cD_i = P_{\le i} = \{ f\in P \mid \deg(f) \le i\}$ for all $i\in \ZZ$.
The {\bf degree form} $\DF(f)$ of a non-zero polynomial $f\in P$ is its initial form with respect to this filtration,
i.e., the homogeneous component of highest degree of~$f$. For a $P$-module~$M$ which is
a residue class module of a graded free $P$-module $F = \bigoplus_{i=1}^r P(-k_i)$ by a (not necessarily graded)
submodule~$U$, we define a {\bf degree filtration} $\cE = (\cE_i)_{i\in\ZZ}$
on $M = F/U$ by letting $\cE_i = \{ (\bar{g}_1,\dots,\bar{g}_r) \mid \deg(g_j) + k_j \le i \}$ for $j=1,\dots,r$.
Again we let the {\bf degree form} $\DF(m)$ of a non-zero element of~$M$ be its homogeneous component of
highest degree. For simplicity, we also set $\DF(0)=0$.
  
Moreover, a set of non-zero  elements $\{m_1,\dots,m_s\}$ of~$M$ is called a {\bf Macaulay basis} 
of~$M$ if their degree forms generate the degree form module $\DF(M)= \langle \DF(m) \mid m\in M\rangle$ 
(see~\cite[Sect.~4.2.b]{KR2005}). The next theorem establishes a connection between 
special Macaulay bases of a certain $P$-module and the vanishing of the de Rham cohomology of~$R$.

\begin{theorem}[Macaulay Bases and the de Rham Cohomology]\label{thm:MacBasisDR}$\mathstrut$\\
Let $m\ge 1$, let $I$ be an ideal of~$P$, and let $R=P/I$. 
Assume that $\charac(K)=0$ and that the submodule $dI \wedge \Omega^m_P$ of the free $P$-module 
$\Omega^{m+1}_P$ has a Macaulay basis of the form $\{ df_1\wedge w_1, \dots, df_r \wedge w_r\}$
with $f_i \in I$ and $w_i\in \Omega^m_P$. Then $\HdR^m(R)=0$.
\end{theorem}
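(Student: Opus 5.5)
The plan is to reduce the claim to an explicit statement inside the free $P$-module $\Omega^\dot_P$ and then argue by induction on the degree, using the Macaulay basis to control degree forms and Cartan's magic formula (Prop.~\ref{prop:dRgraded}(a), applied to~$P$ with the standard grading) to produce primitives.

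\textbf{Reduction to a statement in~$\Omega^\dot_P$.} Write $\Omega^m_R = \Omega^m_P/N_m$ with $N_m = I\,\Omega^m_P + dI\wedge\Omega^{m-1}_P$. The computation in the proof of Prop.~\ref{prop:diagram}(b), carried out in~$P$, gives $N_m = dI\wedge \Omega^{m-1}_P$ and $N_{m+1} = dI\wedge\Omega^m_P$. Let a class in $\Ker(\delta_m)$ be represented by $w\in\Omega^m_P$. Then $\eta := dw$ lies in $dI\wedge\Omega^m_P$, and $\eta$ is closed.

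\textbf{Key claim.} Every closed $\eta\in dI\wedge\Omega^m_P$ is of the form $\eta = d\xi$ with $\xi\in N_m$. Granting this, $w-\xi$ is a closed form in $\Omega^m_P$. By Poincar\'e's Lemma (Prop.~\ref{prop:dRpoly}(a)), $w - \xi = dv$ for some $v\in\Omega^{m-1}_P$. Since $\xi \in N_m$, the class of~$w$ in $\Omega^m_R$ equals the image of $d\bar v$. Hence $\HdR^m(R)=0$.

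\textbf{Proof of the key claim.} I prove it by induction on $D=\deg(\eta)$, where $dx_j$ has degree~$1$. If $\eta\ne 0$ then $D\ge m+1\ge 1$.

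\begin{enumerate}
\item[(1)] \emph{Standard representation.} Since $\{df_i\wedge w_i\}$ is a Macaulay basis, we may write $\eta=\sum_i g_i\, df_i\wedge w_i$ with $\deg(g_i)+\deg(df_i\wedge w_i)\le D$ for all~$i$. Consequently
\[
\DF(\eta)=\sum_{i\in T} \DF(g_i)\,\DF(df_i)\wedge\DF(w_i),
\]
where $T$ is the set of indices attaining degree~$D$. We may also assume each $w_i$ is homogeneous, by splitting it into homogeneous components; this is the one point that needs checking against the definition of Macaulay basis.

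\item[(2)] \emph{Primitive of the degree form.} $\DF(\eta)$ is a closed homogeneous form of degree~$D$. By Cartan's formula, $\DF(\eta)=\tfrac1D\, d\,\epsilon(\DF(\eta))$, where $\epsilon$ is the Euler contraction on $\Omega^\dot_P$.

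\item[(3)] \emph{Lifting the primitive into~$N_m$.} $\epsilon(\DF(\eta))$ need not come from~$N_m$, so I replace it by an element of~$N_m$ with the same top-degree part. Define
\[
\xi_D=\tfrac1D\sum_{i\in T} g_i\bigl(\deg(f_i)\, f_i\, w_i - df_i\wedge \epsilon(w_i)\bigr)\in I\Omega^m_P + dI\wedge\Omega^{m-1}_P = N_m .
\]
This is modelled on $\epsilon(g\,df\wedge w)=g\,\epsilon(df)\,w - g\,df\wedge\epsilon(w)$, using that $\epsilon(df)=\sum_j x_j\,\partial f/\partial x_j$ equals $\deg(f)f$ plus terms of lower degree.

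\item[(4)] \emph{Degree comparison.} Show that the degree-$D$ part of $d\xi_D$ is $\tfrac1D\, d\,\epsilon(\DF(\eta))=\DF(\eta)$. Then $\eta' = \eta - d\xi_D$ is closed, lies in $dI\wedge\Omega^m_P$ (because $d\xi_D \in d(N_m)\subseteq N_{m+1}$), and has degree $<D$. By induction, $\eta'=d\xi'$ with $\xi'\in N_m$, so $\eta = d(\xi_D+\xi')$.
\end{enumerate}

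Characteristic zero enters in two places: dividing by~$D$, and Poincar\'e's Lemma.

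\textbf{Main obstacle.} The hardest step is the degree bookkeeping in~(4). One must check that the correction terms do not spoil the leading part. These are the discrepancy $\epsilon(df_i)-\deg(f_i)f_i$, the lower-degree parts of $g_i$ and~$f_i$, and the summands with $i\notin T$. Each of them must contribute to $d\xi_D$ only in degree $<D$.

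The degree-$D$ part of $\xi_D$ should be exactly $\tfrac1D\,\epsilon(\DF(\eta))$. Here I would use that $\epsilon$ is homogeneous of degree~$0$, and that its contribution from~$T$ matches the expansion of $\epsilon(\DF(\eta))$ term by term via step~(1). It is essential at this point that the representation in~(1) is a standard representation, i.e.\ has no cancellation of leading terms above degree~$D$. Supplying that is precisely the role of the Macaulay basis hypothesis.
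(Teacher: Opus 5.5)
Your argument follows the paper's route. Via Poincar\'e's Lemma you reduce to showing that every closed $\eta\in dI\wedge\Omega^m_P$ equals $d\xi$ with $\xi\in dI\wedge\Omega^{m-1}_P$; the paper gets the same reformulation from Prop.~\ref{prop:diagram}.d. You then remove $\DF(\eta)$ by induction on the degree, using a standard representation. The only difference is that you write the primitive $\tfrac{1}{D}\epsilon(\DF(\eta))$ down explicitly via Cartan's formula. The paper instead appeals to $\HdR^m(P/J)=0$ for $J=\langle \DF(f_1),\dots,\DF(f_k)\rangle$ and lifts the resulting element of $J\Omega^m_P+dJ\wedge\Omega^{m-1}_P$. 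That variation is fine.

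The real gap is in step~(1). The formula $\DF(\eta)=\sum_{i\in T}\DF(g_i)\,\DF(df_i)\wedge\DF(w_i)$ uses $\DF(df_i\wedge w_i)=d(\DF(f_i))\wedge\DF(w_i)$. This fails whenever $d(\DF(f_i))\wedge\DF(w_i)=0$, and taking $w_i$ homogeneous does not help.

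For instance, let $P=K[x,y]$, $f=x^2+y$, $I=\langle f\rangle$ and $m=1$. Then $df\wedge dx=-dx\wedge dy$, so $\{df\wedge dx\}$ is a Macaulay basis of $dI\wedge\Omega^1_P=\Omega^2_P$ of the required form, with $w_1=dx$ homogeneous. But $d(\DF(f))\wedge dx=2x\,dx\wedge dx=0$. Take $\eta=-dx\wedge dy$, so $D=2$ and $g_1=1$. Your recipe gives $\xi_2=\tfrac{1}{2}(2f\,dx-x\,df)=\tfrac{1}{2}(2y\,dx-x\,dy)$ and $d\xi_2=-\tfrac{3}{2}\,dx\wedge dy$. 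So $\eta-d\xi_2$ still has degree~$2$: the discrepancy $(\epsilon(df)-2f)\,dx=-y\,dx$ lands exactly in degree~$D$, contrary to the bookkeeping claimed under ``Main obstacle''.

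Nor can you switch to a better Macaulay basis. Every $dh\wedge w\neq 0$ with $0\ne h\in I$ and $d(\DF(h))\wedge\DF(w)\ne 0$ has degree $\deg(h)+\deg(w)\ge 3$, while a Macaulay basis of $\Omega^2_P$ must contain an element of degree~$2$. Here $R\cong K[x]$, so the conclusion holds, but your argument does not establish it.

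Be aware that the paper's proof makes the identical identification when it writes $\DF(w)=\sum_{i\le k} d(\DF(f_i))\wedge\DF(h_iw_i)$. So your proof is complete, and agrees with the paper's, exactly under the extra assumption $d(\DF(f_i))\wedge\DF(w_i)\ne 0$ for all~$i$. In the degenerate case, both arguments need an additional idea.
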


\begin{proof}
For a contradiction, assume that $\HdR^m(R)_\ell \ne 0$ for some $\ell\in \ZZ$. 
We note that $\ell\ge 1$ and choose~$\ell$ minimally. By Prop.~\ref{prop:diagram}.d, there exists an element 
$w\in \Im(\delta_m)\cap (dI \wedge \Omega^m_P)$ of degree~$\ell$ such that $w \notin \Im(\hat{\delta}_m)$.
By the assumption and~\cite[Tut.~48.c]{KR2005}, we can write 
$$
w = h_1 ( df_1\wedge w_1) + \cdots + h_r( df_r \wedge w_r)
=  df_1\wedge h_1 w_1 +\cdots + df_r \wedge h_r w_r
$$
with $h_i\in P$ such that $\deg( df_i \wedge h_i w_i)\le \ell$ for $i=1,\dots,r$.
Without loss of generality, let $k\le r$ be such that $df_1 \wedge h_1 w_1, \dots, df_k \wedge h_k w_k$
are the summands of degree~$\ell$.

Then we have $\DF(w)= d(\DF(f_1))\wedge \DF(h_1w_1) + \cdots + d(\DF(f_k)) \wedge \DF(h_k w_k)$.
For simplicity, we write $g_i=\DF(f_i)$ and $\tilde{w}_i = \DF(g_iw_i)$ for $i=1,\dots,k$.
Now we consider the homogeneous ideal $J = \langle g_1, \dots, g_k \rangle$
and the commutative diagram
\begin{alignat*}{6}
0 \;\To\;\;& J \Omega^m_P     &{}+{}& dJ \wedge \Omega^{m-1}_P &\;\To\; 
        &\;\Omega^m_P &&\;\TTo{\phi_m}\; &&\Omega^m_{P/J} &&\;\To\; 0\\
&& \downarrow\;&\hat{\delta}_m && \;\downarrow\delta_m  && && \downarrow\bar{\delta}_m &&\\
0 \;\To\;\;& J \Omega^{m+1}_P &{}+{}& dJ \wedge \Omega^m_P &\;\To\; 
        &\;\Omega^{m+1}_P &&\;\TTo{\phi_{m+1}}\; &&\Omega^{m+1}_{P/J} &&\;\To\; 0\\
\end{alignat*}
which corresponds to the diagram in Proposition~\ref{prop:diagram}.a with $R=P$ and $S=P/J$.

Since $w\in \Im(\delta_m) = \Ker(\delta_{m+1})$ and $\delta_{m+1}$ is a homogeneous map, 
this implies $\delta_{m+1} (\DF(w))=0$. By Proposition~\ref{prop:dRgraded}, we know $\HdR^m(P/J)_\ell = 0$. 
Hence an application of Prop.~\ref{prop:diagram}.d yields $\DF(w) \in \Im(\hat{\delta}_m)$.
Thus there exist homogeneous elements $u_1,\dots,u_k \in \Omega^m_P$ and 
$v_1,\dots,v_k \in \Omega^{m-1}_P$ such that the element
$$
w' \;=\; \tsum_{i=1}^k \, g_i u_i + 
\tsum_{i=1}^k \, dg_i \wedge v_i \in dJ \wedge \Omega^{m-1}_P
$$
satisfies
$$
\DF(w) \;=\; \hat{\delta}_m (w')   \;=\; \tsum_{i=1}^k (dg_i\wedge u_i + g_i\, du_i) + 
\tsum_{i=1}^k \, dg_i\wedge d v_i
$$
Now we consider the element
$\tilde{w} = w - \sum_{j=1}^k df_j \wedge (u_j +dv_j) - \sum_{j=1}^k f_j\, du_j$. 
Notice that the degree form of~$w$ cancels here, and hence $\deg(\tilde{w}) < \deg(w)$.
Moreover, we have $df_j \wedge dv_j = \hat{\delta}_m (f_j dv_j)\in \Im(\hat{\delta}_m)$ as well as 
$df_j\, u_j + f_j du_j = \hat{\delta}_m(f_j u_j) \in \Im(\hat{\delta}_m)$ for $i=1,\dots,k$,
so that $\tilde{w} \in \Im(\delta_m) \cap dJ \wedge \Omega^m_R$ has the same residue class in $\Coker(\hat{\delta}_m)$
as~$w$. By the minimality of $\deg(w)$, it follows that this residue class is zero, a contradiction.
\end{proof}

The hypothesis in this theorem that $dI \wedge \Omega^m_P$ has a Macaulay basis
of the required special form is essential, as the following example shows. It is inspired 
by~\cite[Ex.~12]{Sch2012}.

\begin{example}
Let $P = \QQ[x,y]$, let $I = \langle f\rangle$ with $f\in P \setminus \QQ$,
and let $R=P/I$. 
\begin{enumerate}
\item[(a)] For $f = xy-x$, we see that $df = (y-1)\, dx + x\, dy$.
Recall that we have $\Omega^1_R = \Omega^1_P/(dI+I\Omega^1_P)$. 

\begin{itemize}
\item[(1)] To compute $\HdR^0(R)=\Ker(d_R)$, we need the kernel of the $K$-linear map
$d_R:\; P / \langle xy -x\rangle  \longrightarrow (P dx \oplus P dy) / 
(dI + I \Omega^1_P)$.
The leading term module of $dI+I\Omega^1_P$ with respect to the module term ordering
{\tt DegRevLex-Pos} (see~\cite[Ex.~1.4.16]{KR2000}) is $\{ x\,dy,\, xy\,dx,\,
y^2\, dx\}$, and the normal form of every element in $Pdx \oplus Pdy$ is in 
$\QQ\, y\,dx \cup \QQ[x]\,dx \cup \QQ[y]\,dy$.
From this it follows that $\Ker(d_R) = \QQ$.

\item[(2)] In order to prove $\HdR^1(R)=0$, we use the theorem. The $P$-submodule
$$
\qquad\qquad dI \wedge \Omega^1_P = \langle df\wedge dx,\, df\wedge dy,\,  f\, dx\wedge dy \rangle
= \langle df\wedge dx,\, df\wedge dy \rangle
$$
of~$\Omega^2_P$ has the Macaulay basis $\{ df\wedge dx,\, df\wedge dy\}$, since
$f\,dx\wedge dy = x\, df\wedge dy$. Thus the theorem yields $\HdR^1(R)=0$.

\item[(3)] Finally, $\HdR^2(R)=0$ follows from Corollary~\ref{cor:highestDR}.
\end{itemize}

\item[(b)] Now we change this example slightly by setting $f=x y^2 - x - 1$.
Proceeding as in~(a), we check that $\HdR^0(R)=\QQ$ and $\HdR^2(R)=0$.
However, when we compute $dI \wedge \Omega^1_P$, we get
\begin{align*}
\qquad\qquad &dI \wedge \Omega^1_P \;=\; \langle df \wedge dx,\, df\wedge dy,\, f\, dx\wedge dy\rangle & \\
&\;=\; \langle -2xy\, dx\wedge dy,\, (y^2-1)\, dx\wedge dy,\, (xy^2-x-1)\, dx\wedge dy\rangle & \\
&\;=\; \langle dx \wedge dy\rangle &
\end{align*}
Clearly, the element $dx\wedge dy$ is not of the form $dg\wedge w$ with $g\in I$ and 
$w\in \Omega^1_P$. Therefore we are not in the setting of the theorem.
A direct calculation of $\HdR^1(R)$ yields the 2-dimensional $\QQ$-vector space
generated by the residue classes of $x\, dy$ and~$xy\, dy$.
\end{enumerate}
\end{example}

In the case of positive characteristic $\charac(K)=p>0$, the theorem does not hold
either, as our next example shows.

\begin{example}
Let $K=\mathbb{F}_3$, let $P=K[x,y]$, let $I=\langle f_1,\, f_2\rangle$, where $f_1=x^4 + y^3$
and $f_2 = y^4 + x^3$, and let $R=P/I$. For every degree compatible term ordering~$\sigma$ we have
$\LT_\sigma(I) = \langle x^4, y^4\rangle$. Hence $R$ is a 0-dimensional $K$-algebra.

Moreover, for $m=1$, the elements $df_1\wedge dy = x^3\, dx\wedge dy$ and $df_2\wedge dx = 
-y^3\, dx\wedge dy$ form a Macaulay basis of $dI + I\Omega^1_P$, as the degree forms of
$f_1\, dx\wedge dy$ and $f_2\, dx\wedge dy$ are clearly multiples of these homogeneous elements.
Hence the hypothesis of the theorem is satisfied.

However, it is not difficult to verify that the image of the universal derivation $d_R$ is the 8-dimensional
vector space 
$$
U = \langle dx, dy, xdx, ydy, ydx + xdy, y^2dx - x y dy, 
x ydx - ydy, x y^2dx + x^2 ydy \rangle_K,
$$
and the kernel of $\delta_1$ is the 10-dimensional vector space $U + \langle x^2 dx,\, y^2 dy \rangle_K$.
Consequently, we have $\dim_K(\HdR^1(R))=2$.
\end{example}

In general, we cannot expect $\HdR^m(R)=0$ for $m\ge 1$ if~$R$ is not quasi-homogeneous.
In the next two sections we study the dimensions of $\HdR^m(R)$ for $m\ge 0$
if $\dim(R)=0$, i.e., if~$R$ is the affine coordinate ring of a 0-dimensional scheme.

\bigbreak
%
%

\section{The De Rham Cohomology of Artinian Local Rings}\label{sec3}

One of the simplest cases of a 0-dimensional scheme is a (not necessarily reduced) $K$-rational
point. In this section we let $K$ be a perfect field and $A=P/I$ an Artinian local ring, 
where $P=K[x_1,\dots,x_n]$, and where $I\subseteq P$ is an ideal which is $\M$-primary
for $\M = \langle x_1,\dots,x_n\rangle$. Our goal is to describe the de Rham cohomology of~$A$
as explicitly as possible.

\medskip
\subsection{Reduced and Fat Points.}
The next case we look at is the case of quasi-homo\-ge\-neous ideals~$I$.
In this case, Prop.~\ref{prop:dRgraded} shows that, in characteristic zero,
the de Rham cohomology of~$A$ is concentrated in $\HdR^0(A)=K$.
In particular, this applies to a reduced point, where $I=\M$, and
to the case of a $K$-rational {\bf fat point} of multiplicity $\nu \ge 1$, i.e.,
to the case $I=\M^\nu$. In these cases the next proposition provides a somewhat 
more detailed picture.

\begin{proposition}\label{prop:dRFatPoint}
Let $\nu\ge 1$, and let $A = P/\M^\nu$.
\begin{enumerate}
\item[(a)] If $\charac(K)=0$ then $\HdR^0(A)=K$ and $\HdR^m(A)=0$ for $m\ge 1$.

\item[(b)] If $\charac(K)=p>0$ then $\HdR^0(A) = A^p$ and $\HdR^m(A)_i=0$
whenever $i\ge 0$ satisfies $p\nmid i$ or $i\ge \nu + m$, or if $m>n$.

\item[(c)] If $\charac(K)=p \ge \nu +n$ then $\HdR^m(A)=0$ for all $m\ge 1$.

\end{enumerate}
\end{proposition}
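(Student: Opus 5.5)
Since $\M^\nu$ is a homogeneous ideal with respect to the standard grading $\deg(x_i)=1$, the ring $A$ is a standard graded $K$-algebra. The plan is to get everything from Proposition~\ref{prop:dRgraded}, together with a degree count in $\Omega^\dot_A$ and Proposition~\ref{prop:dRpoly}.b. Claim~(a) is then immediate from Proposition~\ref{prop:dRgraded}.b.

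For~(b) I use the presentation $\Omega^m_A = \Omega^m_P/(\M^\nu\Omega^m_P + d(\M^\nu)\wedge\Omega^{m-1}_P)$ from Proposition~\ref{prop:diagram}.a, applied with $R=P$ and $S=A$. I grade all forms by $\deg(dx_i)=1$, so $f\,dx_{i_1}\wedge\cdots\wedge dx_{i_m}$ with $f$ homogeneous has degree $\deg(f)+m$.

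\emph{Degree-zero cohomology.} Since $A=P/\M^\nu$, every element of $A$ is a sum of homogeneous pieces of degree $k<\nu$. The relation submodule $\M^\nu\Omega^1_P + d(\M^\nu)$ of $\Omega^1_P$ lives only in degrees $\ge \nu$. Hence for homogeneous $f$ of degree $k<\nu$, the condition $d_A(\bar f)=0$ forces $df=0$ already in $\Omega^1_P$. Because $d_A$ is homogeneous, it suffices to treat homogeneous elements. By Proposition~\ref{prop:dRpoly}.b we get $f\in K[x_1^p,\dots,x_n^p]$. Conversely, such elements are clearly killed by $d_A$. Since $K$ is perfect, the image of $K[x_1^p,\dots,x_n^p]$ in $A$ is exactly $A^p$, so $\HdR^0(A)=A^p$.

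\emph{Vanishing statements in~(b).}
\begin{itemize}
\item The case $p\nmid i$ is Proposition~\ref{prop:dRgraded}.c.
\item The case $m>n$ holds because $\Omega^m_A=0$ there.
\item For $i\ge\nu+m$, a homogeneous element of $\Omega^m_A$ of degree $i$ is a sum of terms $\bar a\, dx_{i_1}\wedge\cdots\wedge dx_{i_m}$ with $\deg(a)=i-m\ge\nu$. Thus $a\in\M^\nu$, so $(\Omega^m_A)_i=0$ and hence $\HdR^m(A)_i=0$.
\end{itemize}

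For~(c), suppose $m\ge1$ and $\HdR^m(A)_i\ne0$. By the last point, $m\le n$. Since every nonzero $m$-form has degree at least $m$, we also have $i\ge m\ge 1$. By~(b), $p\mid i$ and $i<\nu+m\le\nu+n\le p$. So $i$ would be a multiple of $p$ in the range $1\le i\le p-1$, which is impossible.

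The only step needing care is the degree bookkeeping for the kernel of $d_A$: one must check that the relation module contributes nothing in degrees below $\nu$, which holds because $d(\M^\nu)\subseteq \M^{\nu-1}\Omega^1_P$ has degree at least $\nu$. Everything else is formal.
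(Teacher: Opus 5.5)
Your proof is correct and follows essentially the same route as the paper: (a) from Proposition~\ref{prop:dRgraded}.b; $\HdR^0(A)$ by comparing $\Ker(d_A)$ with $\Ker(d_P)=K[x_1^p,\dots,x_n^p]$ in degrees below $\nu$; the vanishing in (b) from Proposition~\ref{prop:dRgraded}.c together with $(\Omega^m_A)_i=0$ for $i\ge\nu+m$; and (c) as a consequence of (b). You supply more detail than the paper does, namely the degree argument on the relation module of $\Omega^1_A$, the explicit $m>n$ case, and the observation that $i\ge m\ge 1$ in part (c), which rules out $i=0$.
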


\begin{proof}
Claim~(a) follows from Prop.~\ref{prop:dRgraded}.b, as $A$ is a standard graded $K$-algebra.

To prove the first part of~(b), we note that $A_i=P_i$ for $0\le i\le\nu-1$ and $A_i=0$ for $i\ge \nu$.
Thus the kernel of~$d_A$ agrees with the kernel of~$d_P$ in degrees $\le \nu-1$ and is zero in higher degrees.
The kernel of~$d_P$ equals $P^p=K[x_1^p, \dots, x_n^p]$ by Prop.~\ref{prop:dRpoly}.b, and this implies the claim.

To compute the higher de Rham cohomology groups in~(b), we use Prop.~\ref{prop:dRgraded}.c and
the fact that $(\Omega^m_A)_i=0$ for $i\ge \nu+m$.

Finally, we note that~(c) follows from~(b), since $p\nmid i$ for all $i< \nu + m \le \nu+n$
whenever $m\in \{1,\dots,n\}$.
\end{proof}

\bigskip
\subsection{Local Complete Intersections.}
If the Artinian local ring $A = P/I$ is a complete intersection, i.e., if~$I$ is generated
by a regular sequence contained in~$\M = \langle x_1,\dots,x_n\rangle$, 
the higher de Rham cohomology groups of~$A$ may not vanish, even 
if $\charac(K)=0$. The following example is manufactured after an example in~\cite{Rei}.
Recall that the {\bf Euler characteristic} of the de Rham complex is 
$$
\chi(\Omega^\bullet_{A/K}) = \sum_{i\ge 0} (-1)^i \dim_K (\HdR^i(A))
$$ 
In this formula we have $\HdR^k(A)=0$ for $k\ge n = \dim(P)$ by Corollary~\ref{cor:highestDR}.

\begin{example}\label{ex:KerD3dim}
Let $A = \QQ[x,y]/ \langle f,g\rangle$, where
$f= x^4+y^5+xy^5$ and $g=x^6$. Then~$A$ is a 0-dimensional local complete intersection
and $\dim_\QQ(A)=30$.

Using ApCoCoA, we verify that the kernel of the universal derivation $d_A$ is the 3-dimensional
$\QQ$-vector space
$$
\Ker(d_A) \;=\; \langle 1,\; (-8/45)\, y^9 -y^8 +x^5 y^2 - x^4 y^3,\; (-37/45)\ y^9 + x^5 y^3 \rangle_\QQ
$$
Moreover, we can check that the image of~$d_A$ is 27-dimensional and the kernel of 
$\delta_1:\; \Omega^1_A \longrightarrow \Omega^2_A$ is 28-dimensional.
A concrete 1-form with a non-trivial de Rham cohomology class is for instance
$w=x^4y\, dx$. 

Altogether, in this example the Euler characteristic of the de Rham complex is $\chi(\Omega^\bullet_A) = 2$.
\end{example}

Our next example illustrates that the Euler characteristic of the de Rham complex of a complete intersection
can be larger than one, even if the higher de Rham cohomology vanishes.

\begin{example}\label{ex:EulerChar2}
In $P=\QQ[x,y]$, the polynomials $f_1 = x^4$ and $f_2 = y^5 + x^2y^2 +xy^3$ define a local Artinian complete
intersection $A = P/\langle f_1, f_2\rangle$. In fact, $\{f_1, f_2\}$ is a strict regular sequence and
$\dim_\QQ(A)=20$.

It is easy to check that $\Ker(d_A) = \QQ \oplus \QQ\,x^3 y^4$. Thus the image of $d_A$ is 18-dimensional.
A straightforward calculation of the kernel of $\delta_1:\; \Omega^1_A \longrightarrow \Omega^2_A$
shows that this kernel is also 18-dimensional. Hence we get $\HdR^1(A)=0$, and
Corollary~\ref{cor:highestDR} yields $\HdR^2(A)=0$.

Altogether, the Euler characteristic of the de Rham complex satisfies $\chi(\Omega^\bullet_A)=2$.
\end{example}

Notice that the ring~$A$ in this example is even a \textbf{strict complete intersection}, i.e.,
that the degree forms $\DF(f_1),\DF(f_2)$ of the polynomials $f_1, f_2$ form a homogeneous regular sequence.

Another attempt to generalize the reduced case to complete intersections uses the following definition
which was introduced in~\cite{Sal}.

\begin{definition}
Let $\M= \langle x_1,\dots,x_n\rangle$ be the homogeneous maximal ideal of~$P$, and let
$\cM= (\M^i)_{i\ge 0}$ be the $\M$-adic filtration of~$P$. 
For every $f\in P\setminus \{0\}$, the initial form $\ini_\cM(f)$
is the homogeneous component of lowest degree of~$f$.
\begin{enumerate}
\item[(a)] A tuple of polynomials $(f_1,\dots,f_\ell)\in P^\ell$ is called
a \textbf{super-regular sequence} if $(\ini_\cM(f_1),\dots,
\ini_\cM(f_\ell))$ is a homogeneous regular sequence in~$P$.

\item[(b)] A local $K$-algebra $A$ is called a \textbf{super-strict
complete intersection} (or, in short, an SSCI) if it is isomorphic to $P / \langle f_1,\dots,f_\ell\rangle$
with a super-regular sequence $(f_1,\dots,f_\ell)$.

\end{enumerate}
\end{definition}

Unfortunately, even a local Artinian super-strict complete intersection can have
non-trivial de Rham cohomology, as the next example shows.

\begin{example}\label{ex:badSSCI}
Let $P = \QQ[x,y]$ be standard graded.
\begin{enumerate}
\item[(a)] Consider the ideal
$I=\langle f_1,f_2\rangle$, where $f_1 = x^4+y^5+xy^4$ and $f_2=y^6$.
Then $A=P/I$ is a local Artinian ring with maximal ideal $\langle \bar{x},\bar{y}\rangle$
and $(f_1,f_2)$ is a super-regular sequence.
A computation with ApCoCoA shows $\Ker(d_A) = \langle 1,\, x^6y \rangle_\QQ$
and $\HdR^1(A)=\{0\}$. By Corollary~\ref{cor:highestDR}, we also have $\HdR^2(A) = \{0\}$.

\item[(b)] Now we look at the ideal $J = \langle f_1, f_3\rangle$, where $f_1= x^4+y^5+xy^4$
as in~(a) and $f_3 = y^8$. Also the ring $B=P/J$ is a local Artinian super-strict complete
intersection.

In this case we calculate $\Ker(d_A) = \langle 1,\, 29 x^6y^2 - 68 x^5 y^3,\, x^6 y^3 \rangle_\QQ$
and $\dim_K(\HdR^1(B)) = 1$. Here the residue class of $x^3 y^2\, dx$ is a $\QQ$-basis of $\HdR^1(B)$.

\end{enumerate}
\end{example}

So, if we have an ideal~$I$ which is not quasi-homogeneous, we have to check in some other way
whether the de Rham cohomology of~$P/I$ is trivial. The next theorem can help us. 
In order to prove it, we need some preparations.

Assume that we are in the setting above. In particular, let
$I=\langle f_1,\dots,f_n\rangle$ be an ideal generated by a super-regular sequence,
and let $A=P/I$.

The $\M$-adic filtration~$\cM$ on~$P$ induces the $\m$-adic filtration
$\cF = (\cF_i)_{i\in\ZZ}$ on~$A$, where $\cF_i = \m^i$. (Here we let $\cF_i=P$ for $i\le 0$.)
Since the ideal $\m$ is nilpotent, the filtration $\cF$ is finite, and thus the
graded ring $\gr_{\m}(A) = \bigoplus_{i\ge 0} \m^i / \m^{i+1}$ is a local Artinian
$K$-algebra, as well. This graded ring has the presentation
$$
\gr_\m(A) \;\cong\; P / \ini_\cM(I) \;=\;
P / \langle \ini_\cM(f_1),\dots, \ini_\cM(f_n) \rangle
$$
and is therefore a 0-dimensional homogeneous complete intersection.
Here the set $\{ \ini_\cM(f_1),\dots, \ini_\cM(f_n) \}$ generates the initial ideal
of~$I$ by~\cite[Prop.~2.1]{VV}. 

Next we define an $\M$-adic filtration on $\Omega^1_P$.
By letting $\cN_i = \M^{i-1} dx_1 \cup \cdots \cup \M^{i-1} dx_n$ for
$i\ge 1$ and $\cN_i=\Omega^1_P$ for $i\le 0$, we get a decreasing filtration $\cN = (\cN_i)_{i\in\ZZ}$
which turns $\Omega^1_P$ into a filtered module over~$(P,\cM)$. This
filtration induces a filtration $\cG^{(1)}$ on
$\Omega^1_A \cong \Omega^1_P / (I\Omega^1_P + dI)$.
As $\Omega^1_A$ is generated by $\{d\bar{x}_1,\dots,d\bar{x}_n\}$
and~$\m$ is nilpotent, the filtration $\cG^{(1)}$ is finite, too.

In a similar fashion we now turn $\Omega^m_P$ into a filtered $(P,\cM)$-module
for every $m\ge 1$. Given $i\ge m$, we define
$$
\cN^{(m)}_i \;=\; \bigoplus_{1\le j_1 < \cdots < j_m\le n} \M^{i-m}\,
dx_{j_1} \wedge \cdots \wedge dx_{j_m} 
$$
and for $i<m$ we let $\cN^{(m)}_i = \Omega^m_P$.
Thus we obtain a descending filtration on~$\Omega^m_P$. 
For simplicity, we denote it by~$\cN$ again.

Finally, we turn $\Omega^m_A$ for every $m\ge 1$
into a filtered $(A,\cF)$-module by letting 
$$
\cG^{(m)}_i \;=\; 
\bigoplus_{1\le j_1 < \cdots < j_m\le n} \m^{i-m}\,
d\bar{x}_{j_1} \wedge \cdots \wedge d\bar{x}_{j_m} 
$$
for $i\ge m$ and $\cG^{(m)}_i = \Omega^m_A$ for $i<m$. Again, the filtration
$\cG^{(m)} = (\cG^{(m)}_i)_{i\in\ZZ}$ is finite.
Altogether, we see that $\Omega^\dot_A = \bigoplus_{m\ge 0} \Omega^m_A$
is a filtered $(A,\cF)$-module and that its filtration $\cG$ is finite.

To simplify the notation, we frequently write $\gr(A)$ instead of $\gr_\m(A)$ and
$\gr(P)$ instead of $\gr_\M(P)$. Of course, the graded ring $\gr(P)$ is isomorphic to~$P$, 
but for the sake of clarity we shall keep the distinction. 
The next lemma plays a key role in the analysis of the de Rham cohomology 
using these filtrations.

\begin{lemma}\label{lem:Phi}
Let $I$ be an $\M$-primary ideal in $P=K[x_1,\dots,x_n]$, and let $A=P/I$.
Then there exists a surjective homomorphism of $\gr_\m(A)$-DG-algebras 
$$
\Phi:\; \Omega^\dot_{\gr_\m(A)} \;\longrightarrow\; \gr_{\cG} ( \Omega^\dot_A )
$$
which is defined by $\Phi(d\bar{x}_{i_1} \wedge \cdots \wedge d\bar{x}_{i_m}) = 
\overline{dx_{i_1} \wedge \cdots \wedge dx_{i_m}}$ for $i_1 < \cdots < i_m$.
\end{lemma}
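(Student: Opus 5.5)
We will use the universal property of $\Omega^\dot_{\gr(A)}$ as the universal DG-algebra of $\gr(A)/K$ (see~\cite[Def.~3.3]{Kun1986}). It suffices to show three things: that $\gr_\cG(\Omega^\dot_A)=\bigoplus_{m,i}\cG^{(m)}_i/\cG^{(m)}_{i+1}$ is a graded-commutative DG-algebra over $\gr(A)$; that its degree-zero part is $\gr(A)$; and that it is generated as a $\gr(A)$-algebra by the classes $\overline{dx_j}$. The $K$-algebra map $\gr(A)\to\gr_\cG(\Omega^\dot_A)$ then extends uniquely to a DG-algebra homomorphism $\Phi$, which sends $d\bar x_{i_1}\wedge\cdots\wedge d\bar x_{i_m}$ to $\overline{dx_{i_1}}\wedge\cdots\wedge\overline{dx_{i_m}}=\overline{dx_{i_1}\wedge\cdots\wedge dx_{i_m}}$.

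First, the filtration is multiplicative: $\cG^{(m)}_i\wedge\cG^{(m')}_{j}\subseteq\cG^{(m+m')}_{i+j}$. This follows from $\m^{i-m}\m^{j-m'}\subseteq\m^{i+j-m-m'}$ on the generators. The case $i<m$ is harmless, since $\cG^{(m)}_i=\cG^{(m)}_m$ there. Hence the wedge product descends to $\gr_\cG$, and graded-commutativity and $\overline{w}\wedge\overline{w}=0$ for odd forms are inherited. Moreover, $\gr_\cG$ is generated over $\gr(A)$ by the $\overline{d\bar x_j}\in\cG^{(1)}_1/\cG^{(1)}_2$, because each $\cG^{(m)}_i$ is spanned by $\m^{i-m}$-multiples of the monomial forms $d\bar x_{j_1}\wedge\cdots\wedge d\bar x_{j_m}$. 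This gives surjectivity once $\Phi$ is defined.

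Next, the differential. We need $\delta(\cG^{(m)}_i)\subseteq\cG^{(m+1)}_{i+1}$, so that $\delta$ induces a degree-preserving map $\gr\delta$ on $\gr_\cG$. For $a\in\m^{i-m}$, write $a$ as a polynomial in the $\bar x_j$ with all terms of order $\ge i-m$. Then $da=\sum_j\partial_j a\,d\bar x_j$ with $\partial_j a\in\m^{i-m-1}$, so $d(a\,d\bar x_{J})=da\wedge d\bar x_J\in\cG^{(m+1)}_{i+1}$. The case $i-m=0$ is trivial. The Leibniz rule and $\gr\delta\circ\gr\delta=0$ pass to the associated graded, so $\gr_\cG(\Omega^\dot_A)$ is a DG-algebra. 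In degree $0$, $\gr\delta$ restricted to $\gr(A)$ is a $K$-derivation $\gr(A)\to\gr_\cG(\Omega^1_A)$ with $\bar x_j\mapsto\overline{dx_j}$.

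The key check, and the main obstacle, is well-definedness: the map must really be the image of a DG-morphism from the universal object, not just a map defined on the free symbols. The universal property handles this. Because $\gr\delta_0$ is a $K$-derivation on $\gr(A)$, it factors uniquely through $d_{\gr(A)}$ via a $\gr(A)$-linear map $\Omega^1_{\gr(A)}\to\gr_\cG(\Omega^1_A)$. This map extends to exterior powers since $\gr_\cG(\Omega^\dot_A)$ is graded-commutative with odd squares zero, and it commutes with differentials because both sides are generated by exact elements $d\bar x_j$ and are closed under Leibniz. Concretely, one should verify that for $f\in\ini_\cM(I)$ the form $df$ vanishes in $\gr_\cG(\Omega^1_A)$. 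If $g\in I$ has $\ini(g)=f$ of degree $s$, then $dg=0$ in $\Omega^1_A$, and $dg\equiv df$ modulo $\cN_{s+1}$. Hence $\overline{df}=0$ in $\cG^{(1)}_s/\cG^{(1)}_{s+1}$, which is consistent with the presentation $\gr(A)=P/\ini_\cM(I)$.
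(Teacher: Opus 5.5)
Your proof is correct and is essentially the paper's argument. The paper builds $\Phi$ from the derivation $\gamma(\bar a)=da+\cG^{(1)}_{i+1}$ via the universal property of $\Omega^1_{\gr(A)}$, then applies $\Lambda^m$ and the multiplication maps $\mu_m$, and checks multiplicativity and compatibility with $d$. Your appeal to the universal DG-algebra property (\cite[Def.~3.3]{Kun1986}) does all of this in one step. You also make explicit that $\delta$ respects $\cG$, which the paper only verifies later, in the proof of Theorem~\ref{thm:localSSCI}.

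One index slip needs fixing. The inclusion that is correct, and the one you actually need, is $\delta(\cG^{(m)}_i)\subseteq\cG^{(m+1)}_i$, not $\cG^{(m+1)}_{i+1}$. Since $\partial_j a\in\m^{i-m-1}=\m^{i-(m+1)}$, the form $da\wedge d\bar x_J$ lies only in $\cG^{(m+1)}_i$. With your version, each $\overline{dx_j}$ would vanish in $\cG^{(1)}_1/\cG^{(1)}_2$, and the induced degree-preserving map would be zero. Your later check that $\overline{df}=0$ in $\cG^{(1)}_s/\cG^{(1)}_{s+1}$ already uses the correct indexing.
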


\begin{proof}
To begin with, we construct~$\Phi$ as a $\gr(A)$-linear map. Then we verify its
other properties.

First we define a map 
$\gamma:\; \gr(A) \longrightarrow \gr_\cG(\Omega^1_A)$ as follows. For $i\ge 0$
and $\bar{a}\in \gr(A)_i$, we choose a representative $a\in \m^i$, and let $\gamma(\bar{a})
=d\, a + \cG^{(1)}_{i+1}$. It is easy to check that~$\gamma$ is a well-defined derivation of
$\gr(A)/K$. By the universal property of $\Omega^1_{\gr(A)}$, we obtain a homomorphism
of $\gr(A)$-modules
$$
\Phi^{(1)}:\; \Omega^1_{\gr(A)} \;\longrightarrow\; \gr_{\cG^{(1)}}(\Omega^1_A)
$$
which satisfies $\Phi^{(1)}(d\bar{x}_i) = dx_i + \cG^{(1)}_2$ for $i=1,\dots,n$.

Next we apply the functor $\Lambda^m_{\gr(A)}$ and get a $\gr(A)$-linear map
$$
\Phi^{(m)}:\; \Omega^m_{\gr(A)} \;\longrightarrow\; \Lambda^m_{\gr(A)} \, \gr_{\cG^{(1)}}(\Omega^1_A)
$$
Then we define the multiplication map $\mu_m:\ \Lambda^m_{\gr(A)} \gr_{\cG^{(1)}}(\Omega^1_A)
\longrightarrow \gr_{\cG^{(m)}}(\Omega^m_A)$ by $\mu_m(\overline{da_1} \wedge \cdots \wedge 
\overline{da_m}) = \overline{da_1\wedge \cdots \wedge da_m}$ and check that it is well-defined
and $\gr(A)$-linear. Altogether, we have constructed the desired $\gr(A)$-linear map
$$
\Phi = {\textstyle\bigoplus\limits_{m\ge 0}} \; \mu_m \circ \Phi^{(m)}:\; 
\Omega^\dot_{\gr(A)} \;\longrightarrow\; \gr_\cG(\Omega^\dot_A)
$$ 

Now we show the other claimed properties of~$\Phi$. The surjectivity is obvious, as the elements
$dx_{i_1} \wedge \cdots \wedge dx_{i_m}$ generate $\Omega^m_A$. Let us prove that~$\Phi$ is
a homomorphism of anti-commutative algebras. 

By the definition of the filtration~$\cG$, the wedge product in~$\Omega^\dot_A$ is compatible
with~$\cG$, i.e., we have $\cG_i \wedge \cG_j \subseteq \cG_{i+j}$ for all $i,j\ge 0$.
This implies that, for $\omega \in \Omega^k_{\gr(A)}$ and $\eta\in \Omega^m_{\gr(A)}$,
we have
$$
\Phi^{(k+m)}(\omega \wedge \eta)  \;=\; \mu_{k+m}(\bar{\omega} \wedge \bar{\eta}) \;=\;
\overline{\omega\wedge\eta} \;=\; \Phi^{(k)}(\omega) \wedge \Phi^{(m)}(\eta)
$$
Lastly, we check that~$\Phi$ commutes with differentiation. Since~$\Phi$ is $K$-linear,
it suffices to check this for $\omega = \bar{a}\, d\bar{x}_{i_1} \wedge \cdots \wedge 
d\bar{x}_{i_m} \in \Omega^m_{\gr(A)}$
with $\bar{a} \in \gr(A)$ and $i_1 < \cdots < i_m$. On one hand, we have
$d_{\gr(A)}(\omega) = d\bar{a} \wedge d\bar{x}_{i_1} \wedge \cdots \wedge d\bar{x}_{i_m}$, 
and therefore $\Phi^{(m+1)}(d_{\gr(A)}(\omega)) = \overline{da \wedge dx_{i_1} \wedge \cdots \wedge
dx_{i_m}}$. On the other hand, we have $\Phi^{(m)}(\omega) = \overline{a dx_{i_1} \wedge \cdots \wedge dx_{i_m}}$,
and hence $d_{\gr(\Omega)}(\Phi^{(m)}(\omega)) = \overline{da \wedge dx_{i_1} \wedge \cdots \wedge dx_{i_m}}$.
This proves the claim.
\end{proof}

Under some additional hypothesis, the map~$\Phi$ is actually an isomorphism, as the following
proposition demonstrates.

\begin{proposition}[The Graded Ring of the Differentials of a Local Artinian SSCI]\label{prop:IsomGrOmega} 
Let $A = K[x_1,\dots,x_n] / \langle f_1,\dots,f_n\rangle$ be a local 
Artinian SSCI. For $1\le m\le n-1$, assume that the elements
$$
\{ f_i dx_J \mid i=1,\dots,n,\; |J|=m \} \cup \{ df_i \wedge dx_{J'} \mid i=1, \dots, n,\;
|J'| = m-1 \}
$$
form a standard basis of $U_m = I\,\Omega^m_P + dI \wedge \Omega^{m-1}_P$
with respect to~$\cG^{(m)}$.

Then there exists a canonical isomorphism of graded $\gr_\m(A)$-modules
$$
\Phi:\; \Omega^\dot_{\gr_\m(A)} \;\longrightarrow\; \gr_{\cG} ( \Omega^\dot_A )
$$
\end{proposition}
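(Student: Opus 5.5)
By Lemma~\ref{lem:Phi}, $\Phi$ is already a surjective homomorphism of $\gr_\m(A)$-DG-algebras, and it respects the grading. So only injectivity remains, and this can be checked one exterior degree $m$ at a time. The cases $m=0$ and $m\ge n+1$ are trivial: for $m=0$ the map is the identity of $\gr_\m(A)$. The top degree $m=n$ needs a separate argument, indicated at the end.

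Fix $m$ with $1\le m\le n-1$. Write $g_i=\ini_\cM(f_i)$ and $J=\langle g_1,\dots,g_n\rangle = \ini_\cM(I)$, the latter by \cite[Prop.~2.1]{VV}. We use the presentations
$$
\Omega^m_A \cong \Omega^m_P/U_m, \qquad \Omega^m_{\gr(A)} \cong \Omega^m_P/\bigl(J\,\Omega^m_P + dJ\wedge \Omega^{m-1}_P\bigr),
$$
where we identify $\gr(P)$ with $P$. Since $\Omega^m_P$ is free with the filtration $\cN^{(m)}$, we have $\gr_\cN(\Omega^m_P)\cong \Omega^m_P$ with its standard grading, where $dx_J$ has degree $m$. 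The filtration $\cG^{(m)}$ is the image filtration of $\cN^{(m)}$, so the general theory of filtered quotients gives
$$
\gr_{\cG}(\Omega^m_A)\;\cong\; \Omega^m_P/\ini_\cN(U_m),
$$
where $\ini_\cN(U_m)$ is the submodule generated by the initial forms of all elements of $U_m$. Under these identifications, $\Phi$ in degree $m$ is the canonical surjection
$$
\Omega^m_P/\bigl(J\Omega^m_P+dJ\wedge\Omega^{m-1}_P\bigr)\longrightarrow \Omega^m_P/\ini_\cN(U_m).
$$
It is well defined because $J\Omega^m_P+dJ\wedge\Omega^{m-1}_P\subseteq \ini_\cN(U_m)$. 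Injectivity is therefore equivalent to the reverse inclusion $\ini_\cN(U_m)\subseteq J\Omega^m_P+dJ\wedge\Omega^{m-1}_P$.

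This is where the standard basis hypothesis enters. By definition, a standard basis of $U_m$ with respect to the filtration has initial forms that generate $\ini_\cN(U_m)$. So it suffices to compute these initial forms. The initial form of $f_i\,dx_J$ is $g_i\,dx_J$. For $df_i\wedge dx_{J'}$, the lowest-order part is $dg_i\wedge dx_{J'}$, because $d$ lowers the $\M$-adic order of a polynomial by exactly one while the $dx$ factor restores it. This part is nonzero: $g_i$ is a nonconstant form, so $dg_i\neq 0$ in characteristic zero, and $dg_i\wedge dx_{J'}$ is nonzero for a suitable $J'$. If it vanishes for a particular $J'$, the initial form lies in higher order, and this is the one subtle point. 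Here one should use that the hypothesis refers to the standard basis with respect to $\cG^{(m)}$, so the stated generators have the expected orders. All of these initial forms lie in $J\Omega^m_P+dJ\wedge\Omega^{m-1}_P$. This gives the inclusion and hence bijectivity in degree $m$. The main care needed is to justify the identification $\gr_\cG(\Omega^m_A)\cong\Omega^m_P/\ini_\cN(U_m)$ together with the compatibility of $\Phi$ with it, which amounts to checking that $\Phi$ sends $\overline{dx_J}$ to the class of $dx_J$.

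For $m=n$, the module $\Omega^n_P$ has rank one and $U_n = (I + \langle \partial f/\partial x_j \rangle)\,dx_1\wedge\cdots\wedge dx_n$. I would either extend the same argument, noting that the hypothesis for $m=n-1$ controls the Jacobian part, or deduce the degree-$n$ statement from the degree-$(n-1)$ statement by wedging with the $dx_j$. Surjectivity together with a comparison of $K$-dimensions could also close this case. I expect this top degree, along with the precise bookkeeping of initial forms of $df_i\wedge dx_{J'}$, to be the main obstacle.
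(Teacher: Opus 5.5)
Your argument for $1\le m\le n-1$ is the paper's argument. Both present the two sides as quotients of $\Omega^m_{\gr(P)}$, by $V=J\,\Omega^m_{\gr(P)}+dJ\wedge\Omega^{m-1}_{\gr(P)}$ and by $\ini_{\cG^{(m)}}(U_m)$. Both check $V\subseteq\ini_{\cG^{(m)}}(U_m)$ and take the reverse inclusion from the standard-basis hypothesis.

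Your resolution of the ``subtle point'', however, is a non sequitur. Being a standard basis only means that the initial forms of the listed generators generate $\ini_{\cG^{(m)}}(U_m)$; it says nothing about what those initial forms are. For $m\ge 2$ one can have $dg_i\wedge dx_{J'}=0$. This happens whenever $g_i$ involves only the variables indexed by $J'$, e.g.\ $g_i=x_1^2$ with $1\in J'$. Then $\ini(df_i\wedge dx_{J'})$ is built from the higher-order part of $f_i$, and nothing forces it into $V$. What the inclusion $\ini_{\cG^{(m)}}(U_m)\subseteq V$ really requires is that $\ini_{\cG^{(m)}}(U_m)$ be generated by the forms $g_i\,dx_J$ and $dg_i\wedge dx_{J'}$. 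The paper uses its hypothesis in exactly this sense without comment; you should state it that way rather than claim it follows. (For $m=1$ in characteristic $0$ the issue does not arise, since $dg_i\ne 0$.)

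The top degree $m=n$ is a genuine gap in your proposal. The paper does not treat it separately: its proof runs the same argument for every $m\ge 1$, so it also uses the standard-basis property of $U_n$, even though the statement lists only $m\le n-1$. None of your substitutes works.

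\emph{Wedging with the $dx_j$.} This does not transport the standard-basis property from $U_{n-1}$ to $U_n$. For $\omega\in U_{n-1}$ one has in general $\ini(\omega\wedge dx_j)\ne\ini(\omega)\wedge dx_j$ (the right-hand side may vanish). Moreover, sums $\sum_j\omega_j\wedge dx_j$ can have cancelling leading parts. So the degree-$(n-1)$ hypothesis does not yield $\ini_{\cG^{(n)}}(U_n)\subseteq V$ by this route.

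\emph{The dimension count.} This is circular. Work in characteristic $0$ and suppose $\Phi$ is bijective in degrees $<n$. Then $\gr_\cG(\Omega^\dot_A)$ is exact in degrees $\le n-2$, hence so is $\Omega^\dot_A$, since the filtrations are finite. Now compare Euler characteristics, using $\HdR^n(A)=0$ and the exactness of the extended complex $0\to K\to\Omega^\dot_{\gr(A)}$. This gives $\dim_K\Omega^n_{\gr(A)}-\dim_K\Omega^n_A=\dim_K\HdR^{n-1}(A)$. So bijectivity of $\Phi$ in degree $n$ is equivalent to $\HdR^{n-1}(A)=0$, which is exactly what Theorem~\ref{thm:localSSCI} is supposed to deduce from this proposition.

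To finish, you need the standard-basis condition for $m=n$ as well, which is what the paper's proof tacitly assumes.
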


\begin{proof}
In the following, we let $I=\langle f_1,\dots,f_n\rangle$,
and we denote $\ini_\cM(f_i)$ by~$g_i$.
By~\cite[Prop.~2.1]{VV}, we have $\ini_\cM(I) = \langle g_1, \dots, g_n\rangle$ 
and $\gr(A) \cong \gr(P)/ \langle g_1, \dots, g_n\rangle$.  

In view of the lemma, given $m\ge 1$, we want to prove that
$\Phi_m = \mu_m \circ \Phi^{(m)}:\; \Omega^m_{\gr(A)} \longrightarrow \gr_\cG(\Omega^m_A)$
is an isomorphism of $\gr(A)$-modules. We use the epimorphism $\gr(P) \longrightarrow
\gr(P)/\ini_\cG(I) \cong \gr(A)$ and consider the following $\gr(P)$-module presentations.

The $P$-module $\Omega^m_A$ satisfies $\Omega^m_A \cong  \Omega^m_P /U_m$.
This yields 
$$
\gr_{\cG^{(m)}}(\Omega^m_A) \cong \Omega^m_{\gr(P)} / \ini_{\cG^{(m)}}(U_m)
$$
as $\gr(P)$-modules. On the other hand, we have $\Omega^m_{\gr(A)} \cong \Omega^m_{\gr(P)} /V$,
where we let 
$V = \langle \sum_{i=1}^n g_i \Omega^m_{\gr(P)} + \sum_{j=1}^n dg_j \wedge \Omega^{m-1}_{\gr(P)} \rangle$.

Now we check that $V \subseteq \ini_{\cG^{(m)}}(U_m)$. For $i\in \{1,\dots,n\}$
and a homogeneous element $\omega\in \Omega^m_{\gr(P)}$ we have $g_i\,\omega = \ini_\cM(f_i) \, \omega \in 
\ini_\cG^{(m)}(U_m)$. For $j\in \{1,\dots,n\}$ and a homogeneous element $v\in \Omega^{m-1}_{\gr(P)}$
we have $dg_j \wedge v = d(\ini_\cM(f_j)) \wedge v = \ini_\cN(df_j) \wedge v \in \ini_{\cG^{(m)}}(U_m)$.

Thus the map $\Phi_m$ is identified with the canonical epimorphism
$\Omega^m_{\gr(P)}/ V \longrightarrow \Omega^m_{\gr(P)} / \ini_{\cG^{(m)}}(U_m)$, and it remains
to show the inclusion $\ini_{\cG^{(m)}}(U_m) \subseteq V$. 
This follows from the hypothesis which yields that the generators of~$U_m$  
form a standard basis of~$U_m$ with respect to the filtration~$\cG^{(m)}$.
\end{proof}

At this point we are almost ready to prove the desired theorem.
We need one more definition.

\begin{definition}
Let $M,N$ be modules over a ring~$R$ which are equipped with decreasing filtrations
$(\cF_i M)_{i\ge 0}$ and $(\cG_i N)_{i\ge 0}$, respectively.
\begin{enumerate}
\item[(a)] A homomorphism $\phi:\; M \longrightarrow N$ is called a {\bf filtered 
morphism} if $\phi(\cF_i M ) \subseteq \cG_i N$ for all $i\ge 0$.

\item[(b)] A filtered morphism $\phi:\; M \longrightarrow N$ is called {\bf strict}
if $\phi(\cF_i M) = \phi(M) \cap \cG_i N$ for all $i\ge 0$, i.e., if the filtration induced
by~$\phi$ on $\phi(M)$ is the restriction of the filtration of~$N$.
\end{enumerate}
\end{definition}

As shown in~\cite[Ch.~III, \S 2.4]{Bou}, an exact sequence of filtered modules induces
an exact sequence of their graded modules if the maps are strict filtered morphisms.
For finite filtrations, the converse is true, as well, and this allows us to
finally prove the following result.

\begin{theorem}[De Rham Cohomology of Local Artinian SSCI]\label{thm:localSSCI}\, \\
Let $K$ be a field of characteristic 0, let $P=K[x_1,\dots,x_n]$, let 
$I = \langle f_1,\dots,f_n\rangle$ be generated by a super-regular sequence,
and let $A=P/I$ be local ring. Moreover, for $1\le m\le n-1$, assume that  
the elements
$$
\{ f_i dx_J \mid i=1,\dots,n,\; |J|=m \} \cup \{ df_i \wedge dx_{J'} \mid i=1,\dots,n,\;
|J'| = m-1 \}
$$
form a standard basis of $U_m = I\,\Omega^m_P + dI \wedge \Omega^{m-1}_P$.

Then we have $\HdR^0(A)= K$ and $\HdR^m(A) = \{0\}$ for all $m\ge 1$.
\end{theorem}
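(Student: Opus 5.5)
We combine Proposition~\ref{prop:IsomGrOmega} with the quasi-homogeneous case and pass from the associated graded complex back to $\Omega^\dot_A$ by a filtration argument.

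By Proposition~\ref{prop:IsomGrOmega}, the hypothesis gives an isomorphism of $\gr(A)$-DG-algebras $\Phi:\Omega^\dot_{\gr(A)}\to\gr_\cG(\Omega^\dot_A)$ in degrees $1\le m\le n-1$. For $m=0$ it is the identity. For $m=n$, Lemma~\ref{lem:Phi} gives surjectivity, and we only need $\HdR^n=0$, which holds anyway by Corollary~\ref{cor:highestDR}. Since $\gr(A)\cong P/\langle \ini_\cM(f_1),\dots,\ini_\cM(f_n)\rangle$ is standard graded, Proposition~\ref{prop:dRgraded}.b gives
$$\HdR^0(\gr(A))=K \quad\text{and}\quad \HdR^m(\gr(A))=0 \text{ for } m\ge1.$$
The differentials $\delta_m$ of $\Omega^\dot_A$ are filtered morphisms for $\cG$: if $a\in\m^{i-m}$, then $d(a\,dx_J)=da\wedge dx_J\in\cG^{(m+1)}_{i+1}$. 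Hence $\delta$ induces $\gr(\delta)$ on $\gr_\cG(\Omega^\dot_A)$, and by Lemma~\ref{lem:Phi} this corresponds under $\Phi$ to the de Rham differential of $\gr(A)$. So the graded complex $(\gr_\cG(\Omega^\dot_A),\gr(\delta))$ is exact in degrees $m\ge1$ and has cohomology $K$ in degree $0$.

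The key step is to lift exactness from the graded complex to the filtered complex, using that $\cG$ is finite. Let $w\in\Omega^m_A$ with $m\ge1$ and $\delta_m(w)=0$. Let $i$ be maximal with $w\in\cG_i$, and let $\bar w$ be its image in $\gr_\cG(\Omega^m_A)_i$. Then $\gr(\delta)(\bar w)$ is the class of $\delta(w)=0$, so $\bar w$ is a graded cocycle. By exactness, $\bar w=\gr(\delta)(\bar v)$ for some homogeneous $\bar v$ of degree $i$, since $\Phi$ and the differential preserve the grading. Lift $\bar v$ to $v\in\cG^{(m-1)}_i$. Then $w-\delta(v)\in\cG_{i+1}$ and is still closed. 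Iterating, we reach $\cG_N=0$ for large $N$, which gives $w\in\Im(\delta_{m-1})$.

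For $m=0$, the same argument applies to $a\in\Ker(d_A)$ with $a\in\m^i\setminus\m^{i+1}$. Its class in $\gr(A)_i$ satisfies $d_{\gr(A)}(\bar a)=0$, because $da\in\cG^{(1)}_{i+2}$. The identification through $\Phi$ is valid here: $\Phi$ is injective in degree $1$ when $n\ge2$, and for $n=1$ every SSCI is quasi-homogeneous up to a coordinate change, so Corollary~\ref{cor:HdRQuasiHomog} covers that case. Therefore $\bar a\in K$, which forces $i=0$. Subtracting a constant gives $a-c\in\m$, and repeating the argument shows $a-c=0$. Hence $\HdR^0(A)=K$.

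The main obstacle is the degree bookkeeping. One must check that $\Phi$ matches the grading of $\Omega^\dot_{\gr(A)}$, in which $dx_j$ has degree $1$, with the shifted filtration $\cG^{(m)}_i=\m^{i-m}dx_J$. One must also check that $\gr(\delta)$ preserves degree, so that the homogeneous preimage $\bar v$ lands in the correct filtration level. This amounts to verifying that $\delta(\cG_i)\subseteq\cG_i$ rather than $\cG_{i+1}$. The inclusion $\cG_{i+1}\subseteq\cG_i$ makes that harmless, but it means $\gr(\delta)$ is taken as the degree-preserving map $\gr_i\to\gr_i$, exactly as in the statement of Lemma~\ref{lem:Phi}.
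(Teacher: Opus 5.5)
Your strategy is the same as the paper's. You identify $\gr_\cG(\Omega^\dot_A)$ with $\Omega^\dot_{\gr(A)}$ through $\Phi$, apply Proposition~\ref{prop:dRgraded} to the standard graded ring $\gr(A)$, and lift exactness through the finite filtration. Your hand-made lifting is fine and does not even need the strictness that the paper asserts. The gap is your dismissal of degree $n$. Bijectivity of $\Phi_m$ for $m\le n-1$ does not give exactness of $(\gr_\cG(\Omega^\dot_A),\gr(\delta))$ at $\gr_\cG(\Omega^{n-1}_A)$. If $\bar w=\Phi_{n-1}(\tilde w)$ is a graded cocycle, you only learn $\Phi_n(d_{\gr(A)}\tilde w)=0$. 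To apply the Poincar\'e lemma for $\gr(A)$ you need $d_{\gr(A)}\tilde w=0$, which requires injectivity of $\Phi_n$. Corollary~\ref{cor:highestDR} concerns exactness at $\Omega^n_A$, not at $\Omega^{n-1}_A$. So your argument proves $\HdR^0(A)=K$ and $\HdR^m(A)=0$ for $1\le m\le n-2$ and for $m=n$, but not $\HdR^{n-1}(A)=0$. For $n=2$ that missing piece is all of $\HdR^1(A)$.

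This cannot be fixed by bookkeeping. Assume $\Phi_1,\dots,\Phi_{n-1}$ are bijective. Use $\dim_K\Omega^m_A=\dim_K\gr_\cG(\Omega^m_A)=\dim_K\Omega^m_{\gr(A)}-\dim_K\cK^m$ and $\chi(\Omega^\dot_{\gr(A)})=1$, and compare Euler characteristics with what you did prove. This gives
$$\dim_K\HdR^{n-1}(A)\;=\;\dim_K\Ker(\Phi_n).$$
So the missing step is exactly the injectivity of $\Phi_n$. Equivalently, with $g_i=\ini_\cM(f_i)$, you need $\ini_\cM\bigl(I+\langle \partial f_i/\partial x_j\rangle\bigr)=\langle g_i,\,\partial g_i/\partial x_j\rangle$, which is the standard-basis condition for $U_n$. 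The paper gets past this point by quoting Proposition~\ref{prop:IsomGrOmega} as an isomorphism of the whole complex. However, that proposition is proved degree by degree from the standard-basis hypothesis in the same degree, so it also needs the case $m=n$. You must either add that hypothesis or derive it from the cases $m\le n-1$.

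There is also a smaller slip. The correct inclusion is $\delta_m(\cG^{(m)}_i)\subseteq\cG^{(m+1)}_i$, not $\cG^{(m+1)}_{i+1}$: the coefficient loses one power of $\m$ and the form gains one $dx_k$. If the differential really raised the index, the degree-preserving map $\gr(\delta)$ would be zero. Your last paragraph uses the correct version. In the $m=0$ case, ``$da\in\cG^{(1)}_{i+2}$'' should simply read $da=0$.
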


\begin{proof}
Consider the K\"ahler differential algebra $\Omega^\dot_A$ as a complex
\begin{equation}
0 \;\longrightarrow\; K \;\longrightarrow\; A \;\longrightarrow\; \Omega^1_A 
\;\longrightarrow\; \Omega^2_A \;\longrightarrow \cdots \longrightarrow\; 
\Omega^n_A \;\longrightarrow\; 0 \tag{$\ast$} 
\end{equation}
where we have used $K \subseteq \Ker(d_A)$ to extend the complex to the left.
The maps in this complex are filtered morphisms with respect to~$\cG$, since
for $f\in\cF_{i-m}$ and $dx_J = dx_{j_1}\wedge \cdots \wedge dx_{j_m}$ we have
$d(f\, dx_J) = df \wedge dx_J = \sum_{k=1}^n \frac{\partial f}{\partial x_k}\,
dx_k \wedge dx_J \in \cG_i^{(m+1)}$ because of $\frac{\partial f}{\partial x_k} \in \cF_{i-m-1}$.
In fact, as $\ord_\cF(f) = i-m$ and $f\ne 0$ imply that at least one of the partial derivatives
$\frac{\partial f}{\partial x_k}$ in non-zero, the maps in the above complex are strictly filtered
morphisms.

Therefore we obtain a complex of graded $A$-modules 
\begin{equation}
0 \;\longrightarrow\; K \;\longrightarrow\; \gr_\m(A) \;\longrightarrow\;
\gr_\cG(\Omega^1_A) \;\longrightarrow\cdots \longrightarrow\; \gr_\cG(\Omega^n_A) \;\longrightarrow\; 0
\tag{$\ast\ast$}
\end{equation}
which is isomorphic to the extended complex of $\Omega^\dot_{\gr(A)}$ by Prop.~\ref{prop:IsomGrOmega}.
The latter complex is exact by Prop.~\ref{prop:dRgraded}. Thus the complex $(\ast)$ consisting of modules
with finite filtrations and of strictly filtered morphisms has a graded complex $(\ast\ast)$ which is exact.
It follows that the complex $(\ast)$ is exact, as well (cf.~\cite[Lemma 05QH]{Sta}).
This proves the claim of the theorem.
\end{proof}

Our next example shows that the preceding theorem applies in non-trivial cases.

\begin{example} 
Let $P = \QQ[x,y]$, and consider the ideal $I = \langle f_1,f_2\rangle$, 
where $f_1= x^4$ and $f_2 = y^4 + x^2y^2 + x^3y^2$.
Then $A=P/I$ is clearly a local Artinian super-strict complete intersection.

It is not difficult to verify that $\{ f_1\, dx,\, f_1\, dy,\, f_2\,dx,\, f_2\, dy,\, df_1,\, df_2\}$
is a standard basis of $U= I\Omega^1_P + dI$ with respect to the filtration~$\cG$.
Therefore the theorem applies and shows that $\HdR^0(A) = \Ker(d_A) = \QQ$ and $\HdR^i(A)=\{0\}$ for $i\ge 1$.
Notice that the ideal~$I$ is not quasi-homogeneous, but the ring~$A$ has a presentation $A=P/\tilde{I}$ with a
quasi-homogeneous ideal~$\tilde{I}$.
\end{example}

Finally, we provide an example in which the de Rham cohomology of
a local Artinian super-strict complete intersection~$A$ is trivial,
but in which the canonical epimorphism 
$\Phi:\; \Omega^\dot_{\gr(A)} \longrightarrow \gr_{\cG}(\Omega^\dot_A)$
is not an isomorphism.

\begin{example}
Let $P=\QQ[x,y]$, and consider the ideal $I = \langle f_1,f_2\rangle$, 
where $f_1= x^4$ and $f_2= y^4+x^2y^3+x^3y^2$.
Then $A = P/I$ is a local Artinian ring and $\{f_1,f_2\}$ is a super-regular sequence.

By direct calculation we can verify that $\HdR^0(A) = \QQ$ and $\HdR^i(A)=\{0\}$ for $i\ge 1$.
Here one can prove that the algebra~$A$ has no presentation $A \cong P/\tilde{I}$ 
with a quasi-homogeneous ideal~$\tilde{I}$. 
(For more on this, see the final Example~\ref{ex:NonQuasiHomog}.)

Now it is straightforward to verify $\dim_\QQ(\Omega^1_{\gr(A)}) = 24$, 
$\dim_\QQ({\gr_\cG}(\Omega^1_A)=23$, and that the kernel of $\Phi^{(1)}$ is generated
by the residue class of $3x^2y^3\,dx - 2x^3 y^2\,dy$.
\end{example}

This example indicates that the kernel of the universal derivation may be non-trivial
for various reasons and surprise us even in seemingly clear-cut situations.
Therefore we look at it more closely in the next subsection.

\bigskip
\subsection{The Kernel of the Universal Derivation.}
For $\HdR^0(A) = \Ker(d_A)$, it is clear that $K\subseteq \Ker(d_A)$. However, 
this cohomology group is not always just the base field, as the
following example shows. (It is derived from~\cite[Ex.~1.7]{Ale} 
and its results will be reused in Example~\ref{ex:alex}.)

\begin{example}\label{ex:dR0non-trivial}
Let $P = \QQ[x,y]$, let $I = \langle f_1,\dots,f_5\rangle \subseteq P$ with
\begin{align*}
	f_1 &\;= x y^3  +2 x^3,\quad        && f_2 = x^2 y^2  +\tfrac{5}{3} y^4,\\
	f_3 &\;= y^5 -\tfrac{6}{5} x^4,\quad  && f_4 = x^5 +\tfrac{50}{9} x^3 y, \qquad\qquad   f_5 = y^6,
\end{align*}
and consider $A=P/I$. Using the term ordering {\tt DegRevLex} and Macaulay's Basis Theorem (cf.~\cite[Thm.~1.5.7]{KR2000}),
we calculate that the residue classes of the terms in
$$
\{\, 1,\, x,\, y,\, x^2,\, x y,\, y^2,\, x^3,\, x^2 y,\, x y^2,\, y^3,\, x^4,\, y^4\, \}.
$$
form a $\QQ$-basis of~$A$. In particular, the ring~$A$ is an Artinian local $\QQ$-algebra 
with $\dim_\QQ(A)=12$ and maximal ideal $\langle x,\, y\rangle$. Notice that $x^4\notin I$. 

The computation of a presentation of~$\Omega^1_A$ yields
$$
\Omega^1_A \;=\;  \Omega^1_P / (dI + I\Omega^1_P) \;=\; (P dx\oplus P dy)/ \langle
df_i, f_i dx, f_i dy \mid i=1,\dots,5 \rangle
$$  
where 
\begin{align*}
df_1 &\;=\; (y^3 + 6x^2)\, dx + 3x y^2\, dy, \\ 
df_2 &\;=\; 2x y^2\, dx  +  (2x^2 y + \tfrac{20}{3}\, y^3)\, dy,\\ 
df_3 &\;=\; -\tfrac{24}{5}\, x^3\, dx  +  5y^4\, dy, \\ 
df_4 &\;=\; (5x^4 + \tfrac{50}{3}\, x^2 y)\, dx  + \tfrac{50}{9}\, x^3\, dy, \\
df_5 &\;=\; 6 y^5\, dy.
\end{align*}

Since $x^3\, dx = \tfrac{5}{4}\, f_1\, dx + \tfrac{15}{4}\, f_2\, dy 
-\tfrac{5}{4}\, x\, df_1 - \tfrac{5}{4}\, df_3 \in dI$, it follows that
$x^4\in \Ker(d_A)$. Thus we have $\HdR^0(A)=\Ker(d_A) \subseteq  \QQ \oplus \QQ x^4$,
and using {\tt ApCoCoA} (cf.~\cite{ApCoCoA}), we may check that this is indeed an equality.

A direct calculation shows $\HdR^1(A)=0$, and Corollary~\ref{cor:highestDR}
yields $\HdR^2(A)=0$.
\end{example}

Let us study the question when $K \subseteq \Ker(d_A)$ is a proper inclusion
in greater detail. A first conjecture could be that, if the kernel of~$d_A$
is non-trivial (i.e., it properly contains~$K$) and if~$A$ is a super-strict complete
intersection, then the socle element should be in the kernel of~$d_A$. However, 
the following example shows that this is not the case.

\begin{example}\label{ex:SocEscapeKer}
Let $P=\QQ[x,y]$.
\begin{enumerate}
\item[(a)] Consider the ring $A = P / \langle f_1,f_2\rangle$, where $f_1 = x^5$ and 
$f_2 = y^5 + xy^4 + x^3y^3$. Then~$A$ is a super-strict complete intersection and the socle
of~$A$ is $\soc(A) = \QQ\, x^4y^4$. When we calculate $\HdR^0(A)=\Ker(d_A)$, we
get $\Ker(d_A) = \QQ \oplus \QQ\, x^4 y^4$. So far, so good.

\item[(b)] However, now we let $B = P[z] / \langle f_1, f_2, f_3\rangle$,
where $f_1, f_2$ are as in~(a) and $f_3 = z^2$. Then~$B$ is still a super-strict complete intersection
and $\soc(B) = \QQ\, x^4 y^4 z$. The computation of $\HdR^0(B) = \Ker(d_B)$ yields
$\Ker(d_B)= \QQ \oplus \QQ\, x^4 y^4$, and this kernel intersects the socle of~$B$ trivially.
\end{enumerate}
\end{example}

To get a better understanding of~$\Ker(d_A)$, we let $\cK^\dot$ be the kernel
of the $\gr(A)$-DG-algebra homomorphism~$\Phi$ constructed in Lemma~\ref{lem:Phi}.
In other words, we have an exact sequence of complexes
$$
0 \;\longrightarrow\; \cK^\dot \;\longrightarrow\;  \Omega^\dot_{\gr(A)} 
\;\longrightarrow\; \gr_\cG (\Omega^\dot_A) \;\longrightarrow\; 0 
$$
This sequence induces a long exact cohomology sequence
$$
0 \longrightarrow H^0(\cK^\dot) \longrightarrow H^0(\Omega^\dot_{\gr(A)}) \longrightarrow
H^0(\gr_\cG(\Omega^\dot_A)) \longrightarrow H^1(\cK^\dot) \longrightarrow \cdots
$$
Since the map $\Phi^0:\; \gr(A) \longrightarrow \gr(A)$ is an isomorphism, we have $\cK^0=0$, 
and hence $H^0(\cK^\dot)=0$.  The fact that $\gr(A)$ is a graded $K$-algebra implies
$H^0(\Omega^\dot_{\gr(A)}) =K$ and $H^m(\Omega^\dot_{\gr(A)}) = 0$ for $m\ge 1$.
As $H^0(\gr_\cG(\Omega^\dot_A)) = \Ker(\gr(d_A))$, we obtain a short exact sequence
$$
0 \;\longrightarrow\; K \;\longrightarrow\; \Ker(\gr(d_A)) \;\longrightarrow\; H^1(\cK^\dot)
\;\longrightarrow\; 0
$$
and isomorphisms $H^m(\gr_\cG(\Omega^\dot_A)) \cong H^{m+1}(\cK^\dot)$ for $m\ge 1$.
Let us connect these observations to $\HdR^0(A) = \Ker(d_A)$ as follows.

\begin{theorem}[The Dimension of the 0-th De Rham Cohomology]\label{thm:ZerothDeRham}$\mathstrut$\\ 
Let $A=P/I$ be a local Artinian $K$-algebra as above. Then we have
\begin{align*}
\dim_K \HdR^0(A) &\;=\; \dim_K (\Ker(d_A)) \;=\; \dim_K (\gr_\m(\Ker(d_A)))\\ 
&\;\le\;  \dim_K (\Ker(\gr(d_A))) \;=\; 1 +  \dim_K H^1(\cK^\dot) 
\end{align*}
\end{theorem}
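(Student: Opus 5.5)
The chain of (in)equalities splits into four pieces, and I would treat them in order.

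The first equality $\dim_K \HdR^0(A) = \dim_K(\Ker(d_A))$ is simply the definition of $\HdR^0$. For the second, I equip the $K$-subspace $\Ker(d_A)\subseteq A$ with the filtration induced by the $\m$-adic filtration $\cF$, namely $\Ker(d_A)\cap\m^i$. Its associated graded space $\gr_\m(\Ker(d_A))=\bigoplus_i (\Ker(d_A)\cap \m^i)/(\Ker(d_A)\cap\m^{i+1})$ is a graded $K$-subspace of $\gr_\m(A)$. Because $\m$ is nilpotent, this filtration is finite and exhaustive, so the dimension of a finite-dimensional filtered vector space equals the sum of the dimensions of its graded pieces. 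This gives $\dim_K(\Ker(d_A))=\dim_K(\gr_\m(\Ker(d_A)))$.

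For the inequality, I would show $\gr_\m(\Ker(d_A))\subseteq \Ker(\gr(d_A))$ inside $\gr_\m(A)$. Here $\gr(d_A)\colon \gr_\m(A)\to\gr_\cG(\Omega^1_A)$ is the degree-$i$ map $\bar a\mapsto da+\cG^{(1)}_{i+1}$ for $a\in\m^i$. This map is well defined because $d_A$ is a filtered morphism, $d(\m^i)\subseteq\cG^{(1)}_i$, as in the proof of Theorem~\ref{thm:localSSCI}, or equivalently $\gr(d_A)$ is the map $\gamma$ from Lemma~\ref{lem:Phi}. Now take a homogeneous element of $\gr_\m(\Ker(d_A))$ of degree $i$, represented by some $a\in\Ker(d_A)\cap\m^i$. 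Then $da=0$, so $\gr(d_A)(\bar a)=0$. Since both sides are graded subspaces, the inclusion follows, and taking dimensions yields the inequality. I note that strictness is not needed here; in general the inclusion may be proper, which is exactly why we only obtain an inequality.

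The last equality comes from the short exact sequence $0\to K\to\Ker(\gr(d_A))\to H^1(\cK^\dot)\to 0$ derived just before the theorem. To justify it, I would check the ingredients of that long exact sequence. First, $H^0(\gr_\cG(\Omega^\dot_A))=\Ker(\gr(d_A))$ holds because $\Phi$ is a DG-homomorphism and the differential of $\gr_\cG(\Omega^\dot_A)$ in degree $0$ is $\gr(d_A)$. Second, $\cK^0=0$ holds because $\Phi^0$ is the identity of $\gr(A)$. Third, $H^1(\Omega^\dot_{\gr(A)})=0$ and $H^0(\Omega^\dot_{\gr(A)})=K$ by Proposition~\ref{prop:dRgraded}. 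The main point needing care is this third item: Proposition~\ref{prop:dRgraded}.b requires $\charac(K)=0$. So I would either use the standing characteristic-zero assumption of this subsection or state the hypothesis explicitly. In positive characteristic, $H^0$ and $H^1$ of $\Omega^\dot_{\gr(A)}$ can be larger, and then only an inequality would survive. Granting this, exactness gives $\dim_K\Ker(\gr(d_A))=1+\dim_K H^1(\cK^\dot)$, since $H^1(\cK^\dot)$ is finite-dimensional as a subquotient of the finite-dimensional space $\Omega^1_{\gr(A)}$.
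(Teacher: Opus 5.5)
Your proposal is correct and follows the paper's proof. The paper gets the second equality from the finite induced filtration on $\Ker(d_A)$ and the last one from the short exact sequence $0\to K\to\Ker(\gr(d_A))\to H^1(\cK^\dot)\to 0$. For the inequality it sends $c\in\Ker(d_A)\cap\m^p$ to $c+\m^{p+1}\in\Ker(\gr(d_A))$ and checks that this map is injective, which is exactly your identification of $\gr_\m(\Ker(d_A))$ with a graded subspace of $\gr_\m(A)$. Your remark that the last equality uses $\charac(K)=0$ (through Proposition~\ref{prop:dRgraded}) is a fair point; the paper relies on this hypothesis only implicitly.
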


\begin{proof}
To simplify the notation, we drop the names of the filtrations from the operators $\gr(\dots)$
if they are clear from the context.
The first equality follows from the definition of $\HdR^0(A)$, and
the second equality is a consequence of the fact that $\Ker(d_A)$ is a finite dimensional
filtered vector space. As the last equality follows from the above exact sequence, 
it remains to show the inequality.

For the finite dimensional $K$-vector space $\Ker(d_A)$, we have 
$\dim_K(\Ker(d_A)) = \dim_K(\gr(\Ker(d_A)))$ by standard linear algebra.
Given an element $\bar{c} \in \gr(\Ker(d_A))_p$
for some $p\ge 0$, represent it by an element $c\in\Ker(d_A)\cap \m^p$.
The canonical map $\iota_p:\; \gr(\Ker(d_A))_p \longrightarrow 
\gr(A)_p$ satisfies $\iota_p(\bar{c}) = c + \m^{p+1}$.
Then $\gr(d_A)(\iota_p(\bar{c})) = d_A(c) + \cG^{(1)}_{p+1}(\Omega^1_A) = 0$.
Hence the $K$-linear maps $\iota_p$ induce a homogeneous $K$-linear map
$\phi:\; \gr(\Ker(d_A)) \longrightarrow \Ker(\gr(d_A))$.
The desired inequality now follows from the fact that~$\phi$ is injective.
To prove this, let $\bar{c} \in \gr(\Ker(d_A))_p$ such that $\iota_p(\bar{c})=c+\m^{p+1}=0$.
Then we obtain $c\in \Ker(d_A) \cap \m^{p+1}$, and hence $\bar{c}=0$. This
concludes the proof.
\end{proof}

In spite of extensive efforts, we were not able to find an example in which the
inequality in this theorem is strict. On the other hand, we were not able to find
a proof of equality either. Therefore we leave it as an open question for future research
to decide whether the inequality in this theorem can be strict.
An immediate consequence of the preceding theorems is the following hierarchy of
conditions which imply that the kernel of the universal derivation of~$A$ is trivial.

\begin{corollary} \label{cor:H0conditions}
In the setting of the theorem, consider the following conditions.
\begin{enumerate}
\item[(1)] The ideal~$I$ is generated by a strict regular sequence $\{f_1,\dots,f_n\}$
and $\{df_i, f_i\, dx_j \mid i,j=1,\dots,n\}$ is a standard basis of~$dI + I\,\Omega^1_P$.

\item[(2)] $\Ker(\gr(d_A)) = K$

\item[(3)] $H^1(\cK^\dot) = 0$

\item[(4)] $\Ker(d_A)=K$
\end{enumerate}
Then we have (1) $\Rightarrow$ (2) $\Leftrightarrow$ (3) $\Rightarrow$ (4).
\end{corollary}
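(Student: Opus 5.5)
The plan is to read all three implications directly off Theorem~\ref{thm:ZerothDeRham}, the short exact sequence $0\to K\to \Ker(\gr(d_A))\to H^1(\cK^\dot)\to 0$ established just before it, and Proposition~\ref{prop:IsomGrOmega}.

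For (2) $\Leftrightarrow$ (3), the short exact sequence gives $\dim_K \Ker(\gr(d_A)) = 1 + \dim_K H^1(\cK^\dot)$. Since $K$ always embeds into $\Ker(\gr(d_A))$, the equality $\Ker(\gr(d_A))=K$ holds exactly when $\dim_K H^1(\cK^\dot)=0$.

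For (2) $\Rightarrow$ (4), the inequality in Theorem~\ref{thm:ZerothDeRham} yields $\dim_K\Ker(d_A)\le \dim_K\Ker(\gr(d_A)) = 1$. Because $K\subseteq\Ker(d_A)$ always holds, we get $\Ker(d_A)=K$.

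For (1) $\Rightarrow$ (2), I interpret ``strict regular sequence'' here, as in the setting of the preceding subsection, as a super-regular sequence for the $\M$-adic filtration, so that $\gr_\m(A)\cong P/\langle \ini_\cM(f_1),\dots,\ini_\cM(f_n)\rangle$. I will rerun the argument of Proposition~\ref{prop:IsomGrOmega} in degree $m=1$ only.
\begin{enumerate}
\item[(i)] The inclusion $V\subseteq \ini_{\cG^{(1)}}(U_1)$ holds unconditionally.
\item[(ii)] The standard basis hypothesis in~(1) gives $\ini_{\cG^{(1)}}(U_1)\subseteq V$.
\item[(iii)] Hence $\Phi^{(1)}:\Omega^1_{\gr(A)}\to\gr_\cG(\Omega^1_A)$ is an isomorphism, i.e., $\cK^1=0$.
\end{enumerate}
Since $\cK^0=0$ as well, we obtain $H^1(\cK^\dot)=\Ker(\cK^1\to\cK^2)=0$, which is (3), and therefore (2) by the equivalence above.

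Alternatively, one can argue directly. Because $\Phi^0$ is the identity and $\Phi$ commutes with differentials, $\gr(d_A)=\Phi^{(1)}\circ d_{\gr(A)}$. With $\Phi^{(1)}$ injective, this gives $\Ker(\gr(d_A))=\Ker(d_{\gr(A)})=K$ by Proposition~\ref{prop:dRgraded}.b. That proposition requires $\charac(K)=0$, which I assume as in the theorem's setting.

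The only real care point is (1) $\Rightarrow$ (2). The proof of Proposition~\ref{prop:IsomGrOmega} must be checked to work degree by degree, so that a standard basis hypothesis for $U_1$ alone suffices to make $\Phi$ injective in degree~$1$. I must also make sure that ``standard basis'' is meant with respect to $\cG^{(1)}$, the $\M$-adic filtration, not the degree filtration. Everything else is formal.
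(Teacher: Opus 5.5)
Your proof is correct and is essentially the paper's argument. (2)$\Leftrightarrow$(3) comes from the short exact sequence preceding Theorem~\ref{thm:ZerothDeRham}, the step to (4) comes from that theorem's inequality, and (1)$\Rightarrow$(2) comes from the injectivity of $\Phi^{(1)}$ via Proposition~\ref{prop:IsomGrOmega}, which gives $\Ker(\gr(d_A))=\Ker(\Phi^{(1)}\circ d_{\gr(A)})=\Ker(d_{\gr(A)})=K$ exactly as in your ``alternative'' paragraph. You also correctly note that only the degree-one part of that proposition's argument is needed, so the standard basis hypothesis for $U_1$ suffices, with ``strict'' read as super-regular; the paper leaves this refinement implicit.
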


\begin{proof}
Here (1)$\Rightarrow$(2) follows from Prop.~\ref{prop:IsomGrOmega},
as $\Ker(\gr(d_A)) = \Ker( \Phi^{(1)} \circ d_{\gr(A)} ) = \Ker(d_{\gr(A)}) = K$.
The equivalence (2)$\Leftrightarrow$(3) is a consequence of the short exact
sequence immediately preceding Theorem~\ref{thm:ZerothDeRham}, and (3)$\Rightarrow$(4)
follows from that theorem.
\end{proof}

\bigbreak
%
%

\section{The De Rham Cohomology of a 0-Dimensional Scheme}\label{sec4}

Building upon the foundation laid in the preceding sections, we now turn to
finding the de Rham cohomology of the coordinate ring of a 0-dimensional scheme. 
Throughout this section, we work over a base filed~$K$ of characteristic zero.
The subtle and more complicated phenomena in characteristic~$p$ will be left to future
explorations. Let us start with the projective case, because it is an immediate consequence
of Prop.~\ref{prop:dRgraded}.

\medskip
\subsection{Projective 0-Dimensional Schemes.} Let $K$ be a field of characteristic zero,
let $P=K[x_0, x_1, \dots, x_n]$ be standard graded, and let~$I$ be a 0-dimensional 
saturated homogeneous ideal in~$P$. Then~$I$ is the homogeneous vanishing ideal of the 
0-dimensional subscheme $\X=\mathrm{Proj}(R)$ of~$\mathbb{P}^n_K$. The homogeneous coordinate ring
$R=P/I$ of~$\X$ is a standard graded 1-dimensional Cohen-Macaualy ring.
Therefore Prop.~\ref{prop:dRgraded}, immediately yields the following corollary.

\begin{corollary}[De Rham Cohomology of the Homogeneous Coordinate Ring]\label{cor:DeRhamCohProj}\, \\
For the homogeneous coordinate ring $R=P/I$ of a 0-dimensional scheme in~$\mathbb{P}^n_K$, we have
$\HdR^0(R) = K$ and $\HdR^m(R) = 0$ for $m\ge 1$.
\end{corollary}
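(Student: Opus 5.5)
The statement is a direct specialization of Prop.~\ref{prop:dRgraded}.b, so the plan is simply to verify that its hypotheses hold for the homogeneous coordinate ring. Since $P=K[x_0,\dots,x_n]$ is standard graded, each indeterminate has $\deg(x_i)=1\in\NN_+$, so $P$ is positively graded in the sense of that proposition. The ideal $I$ is homogeneous by assumption, so $R=P/I=\bigoplus_{i\ge0}R_i$ is a finitely generated, positively graded $K$-algebra presented exactly as required there. We have $\charac(K)=0$ throughout this section. Invoking Prop.~\ref{prop:dRgraded}.b then gives $\HdR^0(R)=K$ and $\HdR^m(R)=0$ for all $m\ge1$.

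The facts that $I$ is saturated and $0$-dimensional, and that $R$ is one-dimensional Cohen--Macaulay, are not needed for the argument. They only identify $R$ as the homogeneous coordinate ring of $\X$.

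If I want to make the mechanism explicit, I would recall why the proposition works. A homogeneous closed form $w\in\Omega^m_R$ of degree $\deg(w)\ge m\ge1$ satisfies $w=\delta_{m-1}\bigl(\tfrac{1}{\deg(w)}\epsilon_m(w)\bigr)$ by Cartan's formula in Prop.~\ref{prop:dRgraded}.a. For $m=0$, an element $f\in R_k$ with $df=0$ satisfies $kf=\epsilon(df)=0$, hence $f\in K$.

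There is no real obstacle here. The only point to check is that the de Rham differential and the Euler contraction are compatible with the grading on $\Omega^\dot_R$, so that a closed form decomposes into closed homogeneous components. This holds because $I$ is homogeneous, so $\Omega^m_R=\Omega^m_P/(I\Omega^m_P+dI\wedge\Omega^{m-1}_P)$ inherits a grading in which both $\delta$ and $\epsilon$ are homogeneous.
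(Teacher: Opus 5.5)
Your proposal is correct and is the same as the paper's argument: the paper also just notes that $R$ is standard graded (with $\charac(K)=0$) and applies Prop.~\ref{prop:dRgraded}.b. Your extra remarks, namely that $\delta$ and $\epsilon$ are homogeneous and that saturation and the Cohen--Macaulay property are not used, are accurate and merely spell out what the paper leaves implicit.
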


Let us denote the set of closed points of~$\X$ by $\Supp(\X)=\{p_1,\dots,p_s\}$.
Since~$K$ is infinite, we may assume w.l.o.g.\ that no point in the support of~$\X$ lies on 
the hyperplane $\mathcal{Z}(x_0)$. 
Under this assumption, the elements $x_0$ and $x_0-1$ are non-zerodivisors of~$R$.

The residue class ring $\overline{R} = R / \langle x_0\rangle $ is a 0-dimensional 
graded $K$-algebra, and thus a finite-dimensional $K$-vector space.
It is called the \textbf{Artinian reduction} of~$R$. Again, Prop.~\ref{prop:dRgraded}
shows that the de Rham cohomology of~$\overline{R}$ is tirival.

\begin{corollary}[De Rham Cohomology of the Artinian Reduction]\label{cor:DRofArtRed}\, \\
For the Artinian reduction $\overline{R}$ of the homogeneous coordinate ring of a 0-dimensional		
scheme in~$\mathbb{P}^n_K$, we have $\HdR^0(\overline{R})=K$ and $\HdR^m(\overline{R})=0$ for $m\ge 1$.
\end{corollary}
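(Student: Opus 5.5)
The plan is to apply Prop.~\ref{prop:dRgraded} directly to $\overline{R}$, so the first task is to present $\overline{R}$ as a positively graded $K$-algebra of the form required there. Since $I$ is homogeneous in the standard graded ring $P=K[x_0,\dots,x_n]$ and $x_0$ is a homogeneous element of degree one, the ideal $I+\langle x_0\rangle$ is homogeneous. Hence
$$
\overline{R} \;=\; R/\langle x_0\rangle \;\cong\; P/(I+\langle x_0\rangle)
$$
is a standard graded $K$-algebra. We may also eliminate $x_0$ and write $\overline{R}\cong K[x_1,\dots,x_n]/\overline{I}$, where $\overline{I}$ is the image of $I$ under $x_0\mapsto 0$. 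This ideal is again homogeneous, with $\deg(x_i)=1$ for all $i$. Either presentation puts us in the hypotheses of Prop.~\ref{prop:dRgraded}, with all degrees in $\NN_+$.

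Because $\charac(K)=0$ throughout this section, Prop.~\ref{prop:dRgraded}.b gives $\HdR^0(\overline{R})=K$ and $\HdR^m(\overline{R})=0$ for all $m\ge 1$. Concretely, Cartan's formula shows that every homogeneous closed $m$-form $w$ of degree $\deg(w)\ge 1$ satisfies $w=\delta_{m-1}\bigl(\frac{1}{\deg(w)}\epsilon_m(w)\bigr)$. For $m=0$, a homogeneous $f$ of degree $k\ge 1$ with $df=0$ satisfies $kf=\epsilon(df)=0$, so $f=0$. Thus the kernel of $d$ on $\overline{R}$ is exactly $\overline{R}_0=K$.

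There is essentially no obstacle. The only points to check are that the grading on $\overline{R}$ is indeed positive, which holds because all generators have degree one, and that $\overline{R}_0=K$, which holds because $I\subseteq \langle x_0,\dots,x_n\rangle$ as $I$ is a proper homogeneous ideal. The zero-dimensionality of $\overline{R}$ and the assumption that no point of $\Supp(\X)$ lies on $\mathcal{Z}(x_0)$ are not needed for the cohomological statement itself. They only guarantee that $\overline{R}$ is the Artinian reduction in the usual sense.
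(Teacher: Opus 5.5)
Your proposal is correct and matches the paper's argument: the paper simply notes that the Artinian reduction $\overline{R}=R/\langle x_0\rangle$ is a graded $K$-algebra and applies Prop.~\ref{prop:dRgraded}.b in characteristic zero. Your additional checks are accurate and harmless: the grading is positive, $\overline{R}_0=K$, and the hypothesis on $\mathcal{Z}(x_0)$ plays no role in the cohomological conclusion.
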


The residue class ring $S=R/\langle x_0-1\rangle$ is the coordinate ring of~$\X$
in the affine space $\mathbb{A}^n_K \cong D_+(x_0)$.
The canonical $K$-algebra epimorphism $\psi: R \longrightarrow S$ induces a homomorphism 
of graded differential algebras $\Psi:  \Omega^\dot_R \longrightarrow \Omega^\dot_S$.
By \cite[Rules 2.8(e)]{Kun1986}, the map~$\Psi$ induces a homomorhism of graded $K$-algebras
$\HdR(\Psi): \HdR(R)\longrightarrow \HdR(S)$. However, as we shall see in the next subsection,
this map is not an isomorphism in general.

\medskip
\subsection{Affine 0-Dimensional Schemes.}

Now let $K$ be a field of characteristic zero, let $P=K[x_1,\dots,x_n]$,
let~$I$ be a 0-dimensional ideal in~$P$, and let $S=P/I$.
Then $\X = \mathcal{Z}(I)$ is a 0-dimensional subscheme of $\mathbb{A}^n_K$
and~$S$ is the affine coordinate ring of~$\X$.

\begin{remark}(Reduction to the Local Case)\label{rem:ReduceToLocal}\, \\
In the ring~$S$, we let $\langle 0\rangle = \q_1 \cap \cdots \cap \q_s$ be the primary decomposition
of the zero ideal. For $i=1,\dots,s$, let $\m_i=\Rad(\q_i)$, so that $\m_1,\dots,\m_s$
are the maximal ideals of~$S$.

Then the Chinese Remainder Theorem yields an isomorphism 
$S \cong A_1 \times \cdots \times A_s$, where $A_i = S/\q_i = \mathcal{O}_{\X,p_i}$ 
is the local ring of~$\X$ at the corresponding point~$p_i$ of its support. 
The rings $A_1, \dots, A_s$ are Artinian local rings. By Prop.~\ref{prop:functorial}.b,
we have an isomorphism of graded $K$-algebras
$$
\HdR(S) \;\cong\; \HdR(A_1) \times \cdots \times \HdR(A_s)
$$
In particular, $\HdR^0(S)\cong \prod_{i=1}^s \HdR^0(A_i)$ shows $\dim_K(\HdR^0(S)\ge s$. 
\end{remark}

In view of this remark, we now assume that $\X=\mathcal{Z}(I)$ is a 0-dimensional scheme 
in~$\mathbb{A}^n_K$ concentrated at one point, and that its affine coordinate ring is $A=P/I$, 
where~$I$ is an $\M$-primary ideal for some maximal ideal~$\M$ of~$P$.

\begin{proposition}[Reduction to $K$-Rational Support]\label{prop:ReduceToKRat}\, \\
Let $A=P/I$ be an Artinian local $K$-algebra with maximal ideal $\m=\M+I$, where~$\M$ is a maximal
ideal of~$P$ and~$I$ is $\M$-primary.
\begin{enumerate}
\item[(a)] There exists a Galois extension~$L$ of~$K$ such that
$\M \otimes_K L = \N_1 \cap \cdots \cap \N_r$ with linear maximal ideals $\N_1,\dots,\N_r$
in $\overline{P} = L[x_1,\dots,x_n]$.

\item[(b)] We have $I \otimes_K L = \Q_1 \cap \cdots \cap \Q_r$ with $\N_i$-primary 
ideals~$\Q_i$ in~$\overline{P}$, and we obtain an isomorphism
$A \otimes_K L \cong B_1 \times \cdots \times B_r$
with $B_i = \overline{P} / \Q_i$ for $i=1,\dots,r$.

\item[(c)] There exist an $\langle x_1,\dots, x_n\rangle$-primary
ideal~$\Q$ in~$\overline{P}$ and $L$-algebra isomorphisms $B\cong B_1 \cong \cdots \cong B_r$
for $B = \overline{P}/\Q$.

\item[(d)] The de Rham cohomology of~$A\otimes_K L$ satisfies 
$$
\HdR(A)\otimes_K L  \;\cong\; \HdR(B) \times \cdots \times \HdR(B)
$$
where~$L$ and~$B$ are chosen as above, and where there are~$r$ factors on the right-hand side. 

\end{enumerate}
\end{proposition}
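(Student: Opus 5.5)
We will argue part by part. The key inputs are that $K$ is perfect, so $P/\M$ is a finite separable extension of~$K$, and the functorial properties in Prop.~\ref{prop:functorial}.

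For (a), let $\kappa = P/\M$ and let $L$ be the normal closure of~$\kappa$ over~$K$ inside an algebraic closure. Since~$K$ is perfect, $\kappa/K$ is separable, so $L/K$ is Galois. Then $\kappa\otimes_K L \cong \overline{P}/(\M\otimes_K L)$ is reduced, because the extension is separable. It also splits as a product of $[\kappa:K]=r$ copies of~$L$, since $L$ contains all conjugates of a primitive element of~$\kappa$. Hence $\M\otimes_K L$ is a radical ideal whose associated primes are the kernels $\N_1,\dots,\N_r$ of the $r$ maps $\overline{P}\to L$. Each $\N_i$ is of the form $\langle x_1-a_{i1},\dots,x_n-a_{in}\rangle$ with $a_{ij}\in L$, i.e., it is linear.

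For (b), we use that $P\to\overline{P}$ is flat and integral. Then $\Rad(I\overline{P}) = \Rad(\M\overline{P}) = \N_1\cap\cdots\cap\N_r$, since $\M^k\subseteq I\subseteq \M$ for some~$k$. So $I\overline{P}$ is a 0-dimensional ideal with exactly these maximal ideals. Its primary decomposition $I\overline{P}=\Q_1\cap\cdots\cap\Q_r$ has $\N_i$-primary components, and the Chinese Remainder Theorem gives $A\otimes_K L\cong \overline{P}/I\overline{P}\cong B_1\times\cdots\times B_r$.

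For (c), let $G=\mathrm{Gal}(L/K)$ act on $\overline{P}=L\otimes_K P$ through the first factor. This action is by ring automorphisms that are $\sigma$-semilinear over~$L$, and it fixes $I\overline{P}$ and $\M\overline{P}$. Therefore $G$ permutes the $\N_i$, transitively, since they correspond to the $K$-embeddings $\kappa\to L$ and~$G$ acts transitively on these. By uniqueness of isolated primary components, $\sigma$ also permutes the~$\Q_i$ accordingly, so $\sigma$ induces ring isomorphisms $B_i\to B_{\sigma(i)}$.

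The main obstacle is that these isomorphisms are only $\sigma$-semilinear, not $L$-linear. To fix this, compose with the $L$-algebra map that twists coefficients: the map $\sigma$ applied to the coefficients of polynomials is itself a $\sigma$-semilinear automorphism of $\overline{P}$. Then $\sigma_{\mathrm{Gal}}\circ\sigma_{\mathrm{coef}}^{-1}$ is an $L$-algebra automorphism of~$\overline{P}$ carrying $\Q_i$ to $\Q_{\sigma(i)}$.

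Concretely, one can do this more simply. Write $I=\langle h_1,\dots,h_t\rangle$ with $h_j\in P$. Then $\Q_i$ is the $\N_i$-primary component of the ideal $\langle h_1,\dots,h_t\rangle$, whose generators have $K$-coefficients. Applying the $L$-linear translation $\tau_i: x_j\mapsto x_j+a_{ij}$ sends $\Q_i$ to an $\langle x_1,\dots,x_n\rangle$-primary ideal. This ideal equals the localization-contraction of $\langle h_j(x+a_i)\rangle$ at the origin. Since $a_{\sigma(i)}=\sigma(a_i)$ and the $h_j$ have coefficients in~$K$, the ideals obtained for $i$ and $\sigma(i)$ differ by applying $\sigma$ to coefficients. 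This alone does not make them $L$-isomorphic, so it is preferable to define~$\Q$ via one fixed $i$ and argue by a Galois-descent dimension count instead.

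Since (d) only needs $\HdR(B_i)\cong\HdR(B)$ as $K$-vector spaces (up to base change), I would in fact fall back on the ring isomorphisms $B_i\cong B_1$ induced by~$\sigma$. These are $K$-algebra isomorphisms, and they identify de Rham cohomology over~$K$. Combined with Prop.~\ref{prop:functorial}, this gives an isomorphism over~$L$ after choosing $L$-structures compatibly. I would set $\Q=\tau_1(\Q_1)$, so that $B=\overline{P}/\Q\cong B_1$ by the linear change of coordinates~$\tau_1$. I expect verifying $L$-linearity of $B_1\cong B_i$ to be the delicate point, and I would check it carefully using the semilinear twist above.

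Finally, (d) follows from Prop.~\ref{prop:functorial}. Part~(a) gives $\HdR(A)\otimes_K L\cong\HdR(A\otimes_K L)$. Part~(b) applied to the decomposition in~(b) of the present statement gives $\HdR(A\otimes_K L)\cong\prod_i\HdR(B_i)$. Part~(c) then gives $\HdR(B_i)\cong\HdR(B)$, since de Rham cohomology is functorial under algebra isomorphisms.
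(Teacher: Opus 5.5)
Your parts (a) and (b) are sound and follow the paper. Your sandwich $\M^k\overline{P}\subseteq I\overline{P}\subseteq \M\overline{P}$ is even a cleaner way than the paper's to see that every $\N_i$ occurs. Part (c), however, has a genuine gap, and the repair you sketch does nothing. On $\overline{P}=L\otimes_K P$ the Galois action $\sigma\otimes\id_P$ \emph{is} the action of $\sigma$ on coefficients. Hence $\sigma_{\mathrm{Gal}}\circ\sigma_{\mathrm{coef}}^{-1}=\id$, which fixes every $\Q_i$.

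No repair is possible, because the $L$-algebra isomorphisms $B_i\cong B_j$ claimed in (c) do not exist in general. Take $K=\QQ$, $P=\QQ[x,y,z]$, $\M=\langle x,y,z^2-2\rangle$ and $I=\langle z^2-2,\,xy(x-y)(x-zy)\rangle+\langle x,y\rangle^5$.
\begin{itemize}
\item Every admissible $L$ contains $\sqrt{2}$, and $r=2$.
\item Put $f_t=xy(x-y)(x-ty)$. Then $B_1\cong L[x,y]/(\langle f_{\sqrt{2}}\rangle+\langle x,y\rangle^5)$ and $B_2\cong L[x,y]/(\langle f_{-\sqrt{2}}\rangle+\langle x,y\rangle^5)$.
\item An $L$-algebra isomorphism $B_1\to B_2$ would induce a graded isomorphism of the associated graded rings. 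These are the $B_i$ themselves, since the ideals are homogeneous.
\item So there would be a linear substitution $g\in\mathrm{GL}_2(L)$ with $g(f_{\sqrt{2}})\in L^\times f_{-\sqrt{2}}$.
\item This forces the point sets $\{0,1,\infty,\sqrt{2}\}$ and $\{0,1,\infty,-\sqrt{2}\}$ in $\mathbb{P}^1$ to have equal $j$-invariants. But with $j(\lambda)=256(\lambda^2-\lambda+1)^3/(\lambda^2(\lambda-1)^2)$ one gets $j(\pm\sqrt{2})=2432\pm 384\sqrt{2}$, which differ.
\end{itemize}
The paper's own proof makes exactly the leap you were uneasy about. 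It asserts that the transitive Galois permutation of the factors ``yields isomorphisms of $L$-algebras''. In fact it only yields $\sigma$-semilinear ring isomorphisms $B_i\to B_{\sigma(i)}$, i.e.\ $B_{\sigma(i)}\cong B_i\otimes_{L,\sigma}L$.

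What does hold is the weaker statement you fell back on, and it is all that is used later. Since $L/K$ is finite separable, $\Omega^1_{L/K}=0$, so $\Omega^m_{B_i/K}=\Omega^m_{B_i/L}$ for all $m$. Hence each $K$-algebra isomorphism $B_1\to B_{\sigma(1)}$ induces a $\sigma$-semilinear isomorphism of de Rham complexes over $L$. It follows that $\dim_L \HdR^m(B_i)=\dim_L\HdR^m(B_1)=\dim_L\HdR^m(B)$ for $B=\overline{P}/\tau_1(\Q_1)$, where $B\cong B_1$ is a genuine $L$-isomorphism.

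Thus (d) holds as an isomorphism of graded $L$-vector spaces. It also holds as an isomorphism of graded $K$-algebras, using the semilinear maps factorwise. This suffices for Prop.~\ref{prop:HdRofFatPoints}, where moreover $B_i=\overline{P}/\N_i^{\nu}$ and translations give honest $L$-isomorphisms.

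In your write-up, state (c) only with $\sigma$-semilinear isomorphisms and make the step $\Omega^1_{L/K}=0$ explicit. Do not defer ``$L$-linearity of $B_1\cong B_i$'' to a later check, since that check cannot succeed.
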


\begin{proof} To prove~(a), we note that $P/\M$ is a finite field extension of~$K$. 
Hence the Primitive Element Theorem yields $P/\M \cong K[x]/\langle f\rangle$ 
for some irreducible polynomial $f\in K[x]$. 
As~$f$ is separable, we can choose the splitting field~$L \supseteq K$ of~$f$
and get that~$f$ is a product of distinct linear factors $\ell_1,\dots,\ell_r \in L[x]$.
This yields 
$$
(P/\M) \otimes_K L \;\cong\; 
L[x]/\langle \ell_1\rangle \times \cdots\times L[x]/ \langle \ell_r\rangle
$$
For $i=1,\dots,r$, the kernel $\N_i$ of the $L$-algebra epimorphism $\epsilon_i:\; \overline{P} \longrightarrow
L[x]/ \langle \ell_i \rangle$ is a linear maximal ideal of~$\overline{P}$. Then the kernel $\M \otimes_K L$
of the $L$-algebra epimorphism $\eta:\; \overline{P}  \longrightarrow \prod_{i=1}^r L[x] / \langle \ell_i\rangle$
equals $\N_1 \cap \cdots \cap \N_r$, as claimed.

Next we show~(b). Since the ideal~$I$ is $\M$-primary, it follows from~(a) that the prime components of the ideal
$I\otimes_K L$ in~$\overline{P}$ are among $\N_1,\dots,\N_r$. As $L/K$ is a Galois extension, all prime
components appear and the primary decomposition of $I\otimes_K L$ has the desired form
$I\otimes_K L = \Q_1 \cap \cdots \cap \Q_r$. Hence the Chinese Remainder Theorem yields the
claimed isomorphism $A \otimes_K L \cong \prod_{i=1}^r \overline{P} / \Q_i$.

For the proof of~(c), we note that the Galois group of~$L/K$ permutes the factors of~$A$ transitively.
This yields isomorphisms of $L$-algebras $B_i \cong B_j$ for $i,j=1,\dots,r$. Since the ideals $\N_i$
are linear maximal ideals, we find linear changes of coordinates identifying them with 
$\langle x_1,\dots,x_n\rangle$ in~$\overline{P}$. Let $\Q$ be the image of~$\Q_1$ under the
isomorphism $\N_1\cong \langle x_1,\dots,x_n\rangle$. Then the isomorphism $\overline{P}/\Q_1 \cong
\overline{P}/\Q$ induces isomorphisms $\overline{P}/\Q_i \cong \overline{P}/\Q$ for $i=2,\dots,r$,
as was to be shown.

Finally, to show~(d), we note that the isomorphisms of~(b) and~(c), together with
Prop.~\ref{prop:functorial}.a,b, yield the claim. 
\end{proof}

This proposition and the remark preceding it allow us to reduce the computation of
the de Rham cohomology of an affine 0-dimensional scheme to the cases studied in
Section~\ref{sec3} after a suitable base field extension and a linear change of coordinates. 
Let us spell out some consequences explicitly. We shall say that~$\X = \mathcal{Z}(I)$ is a \textbf{fat point scheme}
if there are maximal ideals $\M_1,\dots,\M_s$ in $P=K[x_1,\dots,x_n]$ and positive integers
$\nu_1,\dots,\nu_s \in \NN_+$ such that $I = \M_1^{\nu_1} \cap \cdots \cap \M_s^{\nu_s}$.

\begin{proposition}[De Rham Cohomology of Fat Point Schemes]\label{prop:HdRofFatPoints}$\mathstrut$\\
Let $K$ be a field of characteristic zero, let $\M_1,\dots,\M_s$ be maximal ideals
in $P=K[x_1,\dots,x_n]$, let $\nu_1,\dots,\nu_s \in \NN_+$, let $\X = \mathcal{Z}(I)$
be the fat point scheme in $\AA^n_K$ defined by $I = \M_1^{\nu_1} \cap \cdots \cap \M_s^{\nu_s}$,
and let $S=P/I$ be the affine coordinate ring of~$\X$. Moreover, let $\kappa_i=P/\M_i$
for $i=1,\dots,s$.

Then we have $\HdR^0(S) \cong \kappa_1 \times\cdots \times \kappa_s$ and $\HdR^m(S)=0$ for $m\ge 1$.
\end{proposition}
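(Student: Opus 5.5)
First reduce to a single point. Since the ideals $\M_i^{\nu_i}$ are pairwise comaximal, the Chinese Remainder Theorem gives $S\cong A_1\times\cdots\times A_s$ with $A_i=P/\M_i^{\nu_i}$. By Remark~\ref{rem:ReduceToLocal}, that is, Prop.~\ref{prop:functorial}.b, we get $\HdR(S)\cong\prod_i \HdR(A_i)$ as graded $K$-algebras. It therefore suffices to show $\HdR^0(A)\cong\kappa$ and $\HdR^m(A)=0$ for $m\ge1$, where $A=P/\M^\nu$ and $\kappa=P/\M$.

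Next pass to $K$-rational support. Apply Prop.~\ref{prop:ReduceToKRat} with $I=\M^\nu$ and a Galois splitting field~$L$. The point to check is that the primary components of $\M^\nu\otimes_K L$ are exactly $\N_i^\nu$. Since $\M\otimes_K L=\N_1\cap\cdots\cap\N_r$ with pairwise comaximal $\N_i$, we have $\M^\nu\overline{P}=(\N_1\cdots\N_r)^\nu=\N_1^\nu\cap\cdots\cap\N_r^\nu$, because products of comaximal ideals equal their intersections. So $\Q_i=\N_i^\nu$, and after the linear change of coordinates sending $\N_1$ to $\langle x_1,\dots,x_n\rangle$ we obtain $B=\overline{P}/\langle x_1,\dots,x_n\rangle^\nu$. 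A linear change of coordinates preserves powers of ideals, so this is legitimate. Prop.~\ref{prop:dRFatPoint}.a, applied over~$L$ (characteristic zero), gives $\HdR^0(B)=L$ and $\HdR^m(B)=0$ for $m\ge1$. Prop.~\ref{prop:ReduceToKRat}.d then yields $\HdR^m(A)\otimes_K L=0$ for $m\ge1$, hence $\HdR^m(A)=0$ since $L/K$ is faithfully flat. It also yields $\dim_K\HdR^0(A)=\dim_L(L^r)=r=[\kappa:K]$.

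It remains to identify $\HdR^0(A)=\Ker(d_A)$ with $\kappa$ as a ring, not just by dimension; I expect this to be the main obstacle. I would argue as follows. Since $A$ is Artinian local with perfect residue field $\kappa$ in characteristic zero, $A$ contains a coefficient field $\kappa'\cong\kappa$ mapping isomorphically onto $A/\m$; this is the Cohen structure theorem, or Hensel lifting of a separable primitive element. Every element of $\kappa'$ is separable algebraic over~$K$, and separable algebraic elements lie in $\Ker(d_A)$: if $g(a)=0$ with $g'(a)$ a unit, then $0=g'(a)\,da$ forces $da=0$. Here $g'(a)$ is a unit in~$A$ because its image in $A/\m$ is nonzero. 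Hence $\kappa'\subseteq\Ker(d_A)$. Both sides have $K$-dimension $r=[\kappa:K]$, so $\Ker(d_A)=\kappa'\cong\kappa$ as $K$-algebras. Assembling the local factors gives $\HdR^0(S)\cong\kappa_1\times\cdots\times\kappa_s$ and $\HdR^m(S)=0$ for $m\ge1$.
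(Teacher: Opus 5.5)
Your proposal is correct and follows the paper's route: the CRT reduction to a single $A=P/\M^\nu$, the Galois splitting of Prop.~\ref{prop:ReduceToKRat} into $K$-rational fat points handled by Prop.~\ref{prop:dRFatPoint}, and then the dimension count $\dim_K \HdR^0(A)=r=[\kappa:K]$, which upgrades an inclusion $\kappa\subseteq\Ker(d_A)$ to an equality. Your coefficient-field and separability argument, together with your check that $\Q_i=\N_i^\nu$, supplies justifications for two steps that the paper only asserts.
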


\begin{proof}
Using Remark~\ref{rem:ReduceToLocal}, we see that it suffices to consider the case $s=1$,
i.e., the case $A=P/\M^\nu$. Since this is a finitely generated algebra over the field
$\kappa=P/\M$, we know that $\kappa \subseteq \Ker(d_A)$.

Now we apply Prop.~\ref{prop:ReduceToKRat}. We find a Galois extension $L\supseteq K$
such that $\M \otimes_K L = \N_1 \cap \cdots \cap \N_r$ with linear maximal ideals $\N_i$.
Moreover, the rings $B_i= (P \otimes_K L) / \N_i^{\nu}$ are all isomorphic to
$B = (P \otimes_K L) / \N^\nu$ with $\N= \langle x_1,\dots,x_n \rangle$ in $P\otimes_K L = L[x_1,\dots,x_n]$.
Using Prop.~\ref{prop:dRFatPoint}, we obtain
$$
\HdR^0(A \otimes_K L) \cong L^r \hbox{\qquad \rm and\qquad} \HdR^m(A \otimes_K L)=0
\hbox{\qquad\rm for\quad} m\ge 1.
$$
Hence we get $\dim_K(\HdR^0(A)) = \dim_L(\HdR^0(A \otimes_K L))= r = \dim_L((P\otimes_K L)/(\M\otimes_K L))
= \dim_K(P/\M)=\dim_K(\kappa)$, and this shows that the inclusion $\kappa \subseteq \Ker(d_A)$
is an equality.
\end{proof}

It seems worth pointing out that the preceding proposition includes the case
of affine reduced 0-dimensional schemes, i.e., of finite sets of points in~$\AA^n_K$.

\begin{corollary}[De Rham Cohomology of Finite Sets of Points]\label{cor:dRofPointSets}$\mathstrut$\\
Let $\X$ be a reduced 0-dimensional subscheme of~$\AA^n_K$ and $\Supp(\X) = \{ p_1,\dots,p_s\}$.
For $i=1,\dots,s$, let $\kappa_i$ be the residue field of the local ring $\mathcal{O}_{\X,p_i}$,
and let~$S$ be the affine coordinate ring of~$\X$.

Then we have $\HdR^0(S) \cong \kappa_1 \times \cdots \times \kappa_s$ and $\HdR^m(S)=0$ for $m\ge 1$.
\end{corollary}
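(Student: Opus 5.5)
The plan is to derive the corollary directly from Prop.~\ref{prop:HdRofFatPoints} by taking all multiplicities equal to one. A reduced 0-dimensional subscheme $\X$ of~$\AA^n_K$ has a radical vanishing ideal~$I$. Since~$I$ is 0-dimensional, its associated primes are maximal ideals $\M_1,\dots,\M_s$ of~$P$, corresponding to the support points $p_1,\dots,p_s$. The primary decomposition of the radical ideal~$I$ is therefore $I=\M_1\cap\cdots\cap\M_s$. So~$\X$ is the fat point scheme with $\nu_1=\cdots=\nu_s=1$.

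The first step is to identify the fields. The local ring $\mathcal{O}_{\X,p_i}$ is the factor $A_i=S/\q_i$ from Remark~\ref{rem:ReduceToLocal}. Here $\q_i$ is the image of~$\M_i$ in~$S$, and $\q_i$ equals its own radical because~$I$ is radical. Hence $A_i\cong P/\M_i$ is already a field, and its residue field is $\kappa_i=P/\M_i$. This is exactly the field appearing in Prop.~\ref{prop:HdRofFatPoints}.

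The second step is to apply Prop.~\ref{prop:HdRofFatPoints} with $\nu_i=1$. It gives $\HdR^0(S)\cong\kappa_1\times\cdots\times\kappa_s$ and $\HdR^m(S)=0$ for all $m\ge1$, which is the claim. As a sanity check in the local case, $A_i=\kappa_i$ is a finite separable extension of~$K$, since~$K$ has characteristic zero. Therefore $\Omega^1_{A_i/K}=0$, which directly gives $\Ker(d_{A_i})=A_i=\kappa_i$ and the vanishing of all higher cohomology.

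There is no real obstacle in this argument. The only point needing care is the identification of the primary decomposition of a radical 0-dimensional ideal with the intersection of its maximal ideals. That identification follows because~$I$ equals its radical, and the radical is the intersection of the minimal primes.
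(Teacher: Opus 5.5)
Your proposal is correct and takes the same approach as the paper, which gives no separate proof and obtains this corollary as the case $\nu_1=\cdots=\nu_s=1$ of Prop.~\ref{prop:HdRofFatPoints}, with $\kappa_i=P/\M_i$ identified with the residue field of $\mathcal{O}_{\X,p_i}$ as you do. Your ``sanity check'' is in fact a complete proof on its own: since $\Omega^1_{\kappa_i/K}=0$ for the finite separable extensions $\kappa_i/K$, Prop.~\ref{prop:functorial}.b alone gives $\HdR^0(S)=S\cong\kappa_1\times\cdots\times\kappa_s$ and $\HdR^m(S)=0$ for $m\ge 1$, without the Galois splitting.
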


As we saw already in Example~\ref{ex:dR0non-trivial}, for arbitrary
0-dimensional schemes, we may have $\HdR^0(S)\ne K$, even if $\charac(K)=0$.
Example~\ref{ex:dR0non-trivial} is a simplification of~\cite[Ex.~1.7]{Ale} which is a case where
the support of~$\X$ consists of several points. Let us analyze the more complicated
example with the methods  developed here.

\begin{example}\label{ex:alex}
Let $P=\QQ[x_1,x_2]$, let $f=x_1^4 +x_1^2 x_2^3 +x_2^5 \in P$, and let
$I=\langle \frac{\partial f}{\partial x_1}, \frac{\partial f}{\partial x_2} \rangle$.
Then the 0-dimensional scheme $\X=\mathcal{Z}(I)$ has the affine coordinate 
ring $S=P/I$ and length $\dim_\QQ(S)=14$.

The primary decomposition of the zero ideal in~$S$ is $\langle 0\rangle =  \q_1
\cap \q_2$ where $\q_1$ is the residue class ideal of the ideal $\langle f_1,\dots,f_5\rangle$
given in Example~\ref{ex:dR0non-trivial}, and where $\q_2$
is the residue class ideal of $\langle 3x_2-10,\, \tfrac{100}{9}x_1^2 +\tfrac{50000}{243} \rangle$.
We have already seen that $\Spec(S/\q_1)$ is a non-reduced point of length~12.
The ideal~$\q_2$ defines a reduced point whose residue class field satisfies $\dim_\QQ(\kappa_2)=2$.

Now we use Remark~\ref{rem:ReduceToLocal} and get $\HdR^0(S)\cong \HdR^0(S/\q_1) \times
\HdR^0(S/\q_2)$. For the first factor, we saw that $\dim_\QQ(\HdR^0(S/\q_1))=2$. For the second
factor, Corollary~\ref{cor:dRofPointSets} yields $\dim_\QQ(\HdR^0(S/\q_2)) = 2$.
Altogether, we obtain $\dim_\QQ(\HdR^0(S))=4$.

Notice that A.G.~Alexandrov found slightly different numbers in~\cite{Ale}, since he was working
over the convergent power series ring $\mathbb{C}\{\{ x_1,x_2\}\}$.
\end{example}

Finally, we point out that the de Rham cohomology of the affine coordinate ring of~$\X$
may be trivial even if the local rings of~$\X$ are not quasi-homogeneous.
Our concluding example is a case in point.

\begin{example}\label{ex:NonQuasiHomog}
In $P=\QQ[x,y]$, let $I = \langle f_1,\, f_2\rangle$ with $f_1 = x^5$ and $f_2 = y^5+x^4y^2 + x^2 y^4$.
The scheme $\X = \mathcal{Z}(I)$ is 0-dimensional and $\Supp(\X) = \{(0,0)\}$. 

First we determine the de Rham cohomology of~$S=P/I$. It is clear that $\{f_1, f_2\}$ is a super
regular sequence, and thus $\dim_\QQ(S)=25$. Moreover, using ApCoCoA (cf.~\cite{ApCoCoA}) or direct calculation,
we find $\dim_\QQ(\Omega^1_S)=38$ and $\dim_\QQ(\Omega^2_S)=14$. By Corollary~\ref{cor:highestDR}, this implies
$\dim_\QQ(\Im(\delta_1))=14$. Next we compute
\begin{align*}
\HdR^0(S) \;=\;& \Ker(d_S) \;=\; \{ h\in P \mid d_S(\bar{h}) = 0 \}\\ 
\;=\;&  \{ h\in P \mid \tfrac{\partial h}{\partial x}\, dx + \tfrac{\partial h}{\partial y}\, dy
\in P df + P dg + I\, \Omega^1_P \}
\end{align*}
and get $\HdR^0(S)=\QQ$. This yields $\dim_\QQ(\Im(d_S))= \dim_\QQ(S) - 1 =24$,
and since $\dim_\QQ(\Ker(\delta_1)) = \dim_\QQ(\Omega^1_S) - \dim_\QQ(\Im(\delta_1)) = 
38 - 14 = 24$, it follows that $\dim_\QQ(\HdR^1(S)) = \dim_\QQ(\Ker(\delta_1)) -   
\dim_\QQ(\Im(d_S)) = 0$. Thus the ring~$S$ has trivial de Rham cohomology.

Now we check that~$S$ is not quasi-homogeneous. We have to prove that there is no
$\QQ$-algebra isomorphism $\phi:\; P \longrightarrow P$ such that $\phi(I)$ is
homogeneous with respect to a grading given by positive weights $(w_x,w_y) \in \NN_+^2$.
To make $\phi(f_2)$ homogeneous, we have to eliminate the term~$x^4 y^2$. Hence we define
a $\QQ$-algebra automorphism $\psi:\; P \longrightarrow P$ by $\psi(x) = \tilde{x}$
and $\phi(y) = \tilde{y} - \frac{1}{5}\, \tilde{x}^2$. This yields
$$
\psi(f_2) \;=\; \tilde{y}^5 + \tilde{x}^4\, \tilde{y}^2 + \tfrac{2}{5}\, \tilde{x}^4\, \tilde{y}^3 + 
\hbox{\;\rm higher degree terms}
$$
Clearly, there is no way to assign degrees to~$\tilde{x}$ and~$\tilde{y}$ such that the second and
third term of~$\psi(f_2)$ have the same degree. Consequently, the ring~$S$ is not quasi-homogeneous.
\end{example}

\bigskip
\subsection*{Acknowledgments.} The first author thanks Hue University of Education (Vietnam)
for its hospitality during part of the preparation of this paper. 
This work was supported by the \textit{Vietnam Ministry of Education and Training}
under the grant number B2026-DHH-01.

\bigbreak
%
%


\begin{thebibliography}{10}

\bibitem{Ale}
A.G.\ Aleksandrov, \textit{Analytic invariants of multiple points},
Methods and Applications of Analysis 25 (2018), 167-204.

		
\bibitem{ApCoCoA}
The ApCoCoA Team, Applied Computations in Computer Algebra, available at\\
{\tt https://apcocoa.uni-passau.de}
	

\bibitem{Bou}
N.\ Bourbaki, {\it Commutative Algebra}, Hermann Publ., Paris 1972. 


\bibitem{BK} M.\ Brion and S.\ Kumar, {\it Frobenius Splitting Methods in Geometry and
Representation Theory}, Progress in Math.\ {\bf 231}, Birkh\"auser; Boston, 2005.	

	
\bibitem{BH1993}
W.\ Bruns and J. Herzog, \textit{Cohen-Macaulay Rings},  
Cambridge Stud. Adv. Math., vol. 39, Cambridge University Press, Cambridge, 1993.
	
	
\bibitem{DK1999} G.\ de Dominicis and M.\ Kreuzer, K\"ahler differentials for
points in~$\mathbb{P}^n$, J.\ Pure Appl.\ Alg.\ {\bf 141} (1999), 153-173.
	
	
\bibitem {Har1975}
R.\ Hartshorne, On the de Rham cohomology of algebraic varieties, 
Inst. Hautes \'{E}tudes Sci. Publ. Math. \textbf{45} (1975), 5--99.

	
\bibitem {KLL2019}
M.\ Kreuzer, T.N.K.\ Linh, and L.N.\ Long,
K{\"a}hler differential algebras for $0$-dimensional schemes,
J. Algebra \textbf{501} (2019), 255-284.

	
\bibitem {KLL2021}
M. Kreuzer, T.N.K. Linh and L.N.\ Long,
Hilbert polynomial of K{\"a}hler differential modules
for fat point schemes, Acta Math. Vietnam. \textbf{46} (2021), 441-455.

	
\bibitem {KLL2025}
M. Kreuzer, T.N.K. Linh, and L.N. Long, Differential theory of zero-dimensional 
schemes, J.\ Pure Appl.\ Algebra {\bf 229} (2025), paper 107815
	

\bibitem {KR2000}
M.\ Kreuzer and L.\ Robbiano, \textit{Computational Commutative Algebra 1},
Springer-Verlag, Heidelberg, 2000.

	
\bibitem {KR2005}
M.\ Kreuzer and L.\ Robbiano, \textit{Computational Commutative Algebra 2},
Springer-Verlag, Heidelberg, 2005.

	
\bibitem {KR2016}
M.\ Kreuzer and L.\ Robbiano, \textit{Computational Linear and Commutative Algebra},
Springer Int.\ Publ., Cham, 2016.

	
\bibitem {Kun1986}
E. Kunz, \textit{K\"{a}hler Differentials},
Advanced Lect.\ in Math., Vieweg Verlag, Braunschweig, 1986.
	

\bibitem{Rei} H.-J.\ Reiffen, Das Lemma von Poincar\'e f\"ur holomorphe
Differentialformen auf komplexen R\"aumen, Math.\ Zeitschr.\ {\bf 101} (1967), 269--284.


\bibitem{Sal} J.\ Sally, Super-regular sequences, Pacific.\ J.\ Math.\ {\bf 84} (1979),
465--481.


\bibitem{Sch2012}
P.\ Scheiblechner, Effective de Rham cohomology: the hypersurface case, 
ISSAC 12: Proc.\ 37th Int.\ Symp.\ on Symbolic and Algebraic Computation, 
ACM, New York, 2012, pp.\ 305-310.


\bibitem{Sta}
The {S}tacks project authors, {\it The Stacks Project}, 2026, available at \\
{\tt https://stacks.math.columbia.edu}


\bibitem{VV} 
P.\ Valabrega and G.\ Valla, Form rings and regular sequences,
Nagoya Math.\ J.\ {\bf 72} (1978), 93--101.


\bibitem {Wei}
C.A.\ Weibel, \textit{An Introduction to Homological Algebra}, Cambridge Univ.\ Press, Cambridge, 1994.
	
\end{thebibliography}
\end{document}